\theoremstyle{plain}
\newtheorem{Theorem}{Theorem}[section]
\newtheorem{Proposition}[Theorem]{Proposition}
\newtheorem{Lemma}[Theorem]{Lemma}
\theoremstyle{definition}
\newtheorem{Definition}[Theorem]{Definition}
\numberwithin{equation}{section}
\DeclareMathOperator*{\dist}{dist}
\DeclareMathOperator*{\capacity}{Cap}
\newcommand{\loc}{\textrm{loc}}
\newcommand{\mN}{{\mathbb N}}
\newcommand{\mR}{{\mathbb R}}
\newcommand{\cA}{{\mathcal A}}
\newcommand{\cC}{{\mathcal C}}
\newcommand{\cF}{{\mathcal F}}
\newcommand{\cH}{{\mathcal H}}
\newcommand{\cN}{{\mathcal N}}
\newcommand{\oomega}{{\bar{\omega}}}
\newcommand\R{\mathbb R}
\newcommand\M{\mathbb M}
\newcommand\N{\mathbb N}
\newcommand\ep{\varepsilon}
\newcommand\wk{\rightharpoonup}
\renewcommand\d{\,\mathrm{d}}
\newcommand\UUU{\color{black}}
\newcommand\EEE{\color{black}}
\newcommand\MMM{\color{black}}
\newcommand\OOO{\color{black}}
\title[\UUU Equilibria of charged hyperelastic solids]{Equilibria \UUU of charged hyperelastic solids\EEE}
\author[E. Davoli]{Elisa Davoli}
\address{Institute for Analysis and Scientific Computing, TU Wien, Wiedner Hauptstra{\ss}e 8--10, 1040 Wien, Austria}
\email{elisa.davoli@tuwien.ac.at}
\author[A. Molchanova]{Anastasia Molchanova}
\address{Institute for Analysis and Scientific Computing, TU Wien, Wiedner Hauptstra{\ss}e 8--10, 1040 Wien, Austria}
\email{anastasia.molchanova@tuwien.ac.at}
\author[U. Stefanelli]{Ulisse Stefanelli}
\address[Ulisse Stefanelli]{Faculty of Mathematics, University of
  Vienna, Oskar-Morgenstern-Platz 1, A-1090 Vienna, Austria,
Vienna Research Platform on Accelerating
  Photoreaction Discovery, University of Vienna, W\"ahringerstra\ss e 17, 1090 Wien, Austria,
 \& Istituto di
  Matematica Applicata e Tecnologie Informatiche {\it E. Magenes}, via
  Ferrata 1, I-27100 Pavia, Italy
}
\email{ulisse.stefanelli@univie.ac.at}
\subjclass[2010]{49J45; 30C85; 49K20; 74Q99}
\keywords{Charged materials, mixed \OOO Eulerian--Lagrangian \UUU
  formulation, \EEE finite distortion, capacity}
\begin{document}

\begin{abstract}
  \UUU We investigate equilibria of charged deformable materials via
  the minimization of an electroelastic energy. This \MMM features \UUU the coupling \MMM of \UUU elastic response and
  electrostatics by means of a capacitary term, which is
  naturally defined in Eulerian coordinates. The ensuing electroelastic energy
  is then of mixed \OOO Lagrangian--Eulerian \UUU type. We prove that  minimizers
  exist by investigating  the
  continuity properties of the capacitary terms
  under convergence of the deformations. \EEE
\end{abstract}

\maketitle


\section{Introduction}

\UUU

The interaction of electric and mechanical effects in solids is
crucial in different modeling situations and is at the
basis of a variety of applications. We focus here on the description
of the electromechanical equilibrium of a charged
conductor. In \MMM the \UUU absence of an external electric field, this results from the interplay of mechanical  and electrostatic response. The first favors specific deformations under the effect of given external mechanical
loads, whereas the latter favors shapes of
larger capacitance. 

More specifically, we consider the case of a hyperelastic charged conductor, embedded
in \MMM an \UUU insulating medium, which we also assume to be deformable. This
setting is inspired to electroactive-polymer
devices, featuring indeed conductive parts embedded in polymeric
matrices \cite{Ortigosa0}. Coated
wires, printed circuit boards, and  capacitive deformation
sensors \cite{Wang} also \MMM fit within this framework. \UUU

The actual configuration of the 
whole system of conductor and insulator is specified by its deformation
\OOO
$y\colon  \Omega \to \R^d$ 
\UUU
from the bounded reference configuration $\Omega
\subset \R^d$, $d\geq 2$. More precisely, we indicate the reference
configuration of the conductor by $\omega \subset \Omega$, see Figure
\ref{fig1}, so that $y(\oomega)$ indicates the actual position of the deformed
conductor whereas $y(\bar{\Omega}\setminus \oomega)$ indicates the
deformed insulator.

We assume that the conductor carries a given total charge
$Q$. Its equilibrium results
from a competition of mechanical and electric actions. On the one
hand, the body may be subjected to mechanical loading, favoring specific
deformations. On the other hand, the actual shape of the body
determines its electrostatic potential with respect to the background potential. In particular, deformations $y$ maximizing the
{\it electric
  capacitance}  of the \UUU deformed
shape $y(\oomega)$  are 
\OOO preferred. \UUU
A first realization of this
competition, is encoded in the following choice for the  {\it electroelastic} stored energy
of the system 
\begin{equation}\label{def:energy}
    \cF_1(y):=\int\limits_{\Omega}W(\UUU x,\EEE\nabla y(x))\d x+
    \frac{\UUU Q^2}{2\UUU \capacity(y(\oomega))},
  \end{equation}
resulting indeed from the sum of the
{\it elastic stored energy} and the {\it electrostatic
  potential}. 

\EEE

\begin{figure}[h]
\label{fig1}
\centering
\begin{tikzpicture}[xscale=0.7,yscale=0.7, >=latex]
    \draw [ultra thick] (-1,-0.2) -- (-1, 3.5);
    \draw (-1,-0.2) -- (5, -0.2) -- (5,3.5)-- (-1, 3.5);

    \draw [ultra thick] (8,3.5) -- (8,-0.2);
    \draw plot [smooth,tension=0.8] coordinates {(8,-0.2) (9,0.1) (11.5,-0.2) (13,4) (9.5, 3.5) (8,3.5)};
    \draw plot [smooth cycle,tension=0.8] coordinates {(-0.1,0.8) (0.5,0.3) (1.5,1.2) (3,0.7) (3.4,1.5) (2.4,2.3) (0.6,2)};
    \fill [blue, opacity=0.2] plot [smooth cycle,tension=0.8] coordinates {(-0.1,0.8) (0.5,0.3) (1.5,1.2) (3,0.7) (3.4,1.5) (2.4,2.3) (0.6,2)};
    \draw plot [smooth cycle,tension=0.8] coordinates {(9.4,0.5) (10.5,0.5) (11.9,1.7) (10.8,2.7) (9.4,1.6)};
    \fill [blue, opacity=0.2] plot [smooth cycle,tension=0.8] coordinates {(9.4,0.5) (10.5,0.5) (11.9,1.7) (10.8,2.7) (9.4,1.6)};

    \draw [->, blue, ultra thick] (3,3.0) to [out=45,in=135] (9,3);

    \node at (-0.3,2.8) {$\bar \Omega$};
    \node at (-1.5,0.4) {$\Gamma_0$};
    \node at (7.5,0.4) {$\Gamma_0$};
    \node at (2.7,1.5) {$\bar \omega$};
    \node at (12.1,3.2) {$y(\bar \Omega)$};
    \node at (10.9,1.7) {$y(\bar \omega)$};
    \node at (6,4.7) {$y$};  

\end{tikzpicture}
\caption{Setting of the problem.} 
\end{figure}
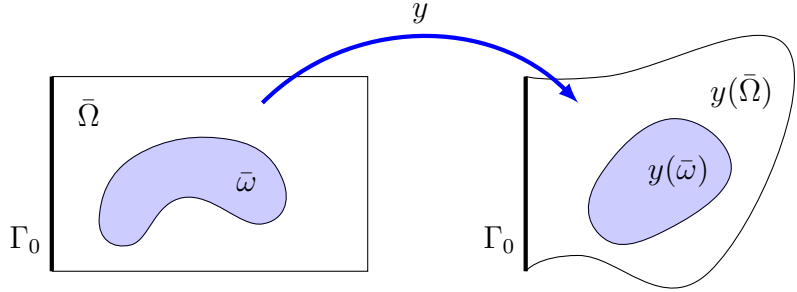

\UUU
In the expression above, 
$W\colon\UUU \Omega \times \EEE \M^{d\times d}\to [0,+\infty)$ is \UUU 
the elastic \EEE energy density \UUU of the medium, where $\M^{d\times
  d}$ indicates $(d\times d)$-matrices. It is
assumed to be a Carath\'eodory integrand and to satisfy \EEE the following
assumptions:
\begin{align}\UUU
\text{(Polyconvexity)} \qquad  &W(x,F)={\mathbb
                W}(x,\mathcal{M}(F)) \ \ \text{where $\mathbb W(x,\cdot)$ is convex
                }\nonumber\\
  & \text{in the minors $\mathcal{M}(F)$ of $F\in \mathbb
                M^{d\times d}$, $\forall x \in \Omega$} \label{hp:poly}\\[2mm]
\text{(Growth)}\qquad  &
                                                    \text{\UUU there exists
                                                    $c_W>0$, $p>d$, and
                                                    $s>d-1$ such that}
                                  \nonumber\\
  &
 \begin{cases}
W(\UUU x, \EEE F)\geq \UUU c_W\EEE |F|^q
+ \UUU c_W\EEE \dfrac{|F|^{ds}}{{\det F}^s}\UUU -\displaystyle\frac{1}{c_W} \EEE &\
\     \text{if}\ \textrm{det}\,F\geq 0,\\
W(\UUU x, \EEE F)=+\infty&\text{otherwise},
\end{cases}\label{hp:growth}
                           \nonumber\\[1mm]
&\text{$\forall F \in \M^{d\times d} $, \
                       $\forall x \in \Omega$.
                                  }
\end{align}
\UUU In particular, the space-dependence $x\in \Omega \mapsto W(x,F)$
models the possibly very different elastic response of the conductor
and the insulator.  \EEE
We
refer to Section \ref{sec:adm_def} below for a discussion on the
meaning and role of the second term in the first line of
\eqref{hp:growth}. \UUU Let us mention that  the above
conditions 
\OOO 
\eqref{hp:poly}--\eqref{hp:growth} 
\UUU
are compatible with {\it
  frame indifference}, namely, $ W(\hat RF)=W(F) $ for every 
rotation $\hat R\in
                SO(d):=\{ R \in \M^{d\times d}: \, R^{-1}=R^T \
                \text{and} \ \det R=1\}$.  Although not 
                needed in our analysis, frame indifference is a crucial requirement from
                the modeling viewpoint. Note \MMM that \UUU no loads are assumed for the sake of
notational simplicity. The case of nonvanishing loads can be treated
as well. \EEE

\UUU As concerns the electric capacitance of the deformed body
$y(\oomega)$, the choice in \eqref{def:energy} corresponds to that of
{\it self-capacitance}, where the electrostatic potential is taken in
relation with a far background, ideally at $\infty$. From the modeling
viewpoint, this corresponds to the case in which the complement of the
deformed body $y(\oomega)$ has negligible dielectric response.
In $d \geq 3$
dimensions, for all compact sets $E\subset \R^d$ one defines
$$\frac{1}{\capacity(E)} =
\min_{\mu}\int\limits_{E}\int\limits_{E}\frac{1}{\sigma_d \varepsilon_0|x-y|^{d-2}}\d \mu
  (x)\, \d \mu(y)$$
where $\sigma_d$ is the surface of the unit ball, $\varepsilon_0$ denotes the permittivity of vacuum, and the
infimum is taken on nonnegative Borel measures $\mu$ with support
in the compact set $E \subset \R^d$ and $\mu(E)=1$. Note that such minimum exists and concentrates on the
boundary 
$\partial E$. Following 
\cite[Ch.~11.15]{LieLos2001}, one can equivalently variationally
reformulate the latter by letting the {\it capacity} of $E$ be defined as
\begin{align}
    & \nonumber
    \capacity(E):=\inf\Bigg\{\int\limits_{\R^d}|\nabla
      v(\xi)|^2\d\xi:\,v\in L^{2^*}(\R^d)\text{ with }\nabla v\in L^2(\R^d;\R^d)\\
      &\qquad\qquad\qquad\qquad\qquad\qquad\qquad \label{eq:P}\text{ and }v\geq 1\text{ a.e.~in a neighb. of }E\Bigg\}
\end{align}
where $2^* = \frac{2d}{d-2}$, see \cite[Ch.~4.3, 8.2, and 8.3]{LieLos2001}.

  \EEE 
  \UUU The aim of this paper is to investigate minimizers of the
  electroelastic energy $\cF_1$ on the set of {\it admissible
    deformations}
  \begin{equation}
    \cA:=\{y\in W^{1,q}(\Omega;\mR^d) :\ y \ \text{is a homeomorphism
  and} \ y={\rm id} \  \text{on} \ \Gamma_0\}\label{cA}
\end{equation}
where the boundary portion $\Gamma_0 \subset \partial \Omega$ where
the body is clamped is assumed to be nonempty and open in the topology
of $\partial \Omega$. It is worth noting that the electroelastic
energy $\cF_1$ is 
of {\it mixed Eulerian-Lagrangian} type. Indeed, deformations are Lagrangian in nature,
for they relate to the reference configuration, whereas the capacitary
term is Eulerian, as it depends on the actual shape $y(\oomega)$ of
the conductor. \EEE


\UUU 

We are also interested in a second, different situation where the reference
electrostatic potential is that of the complement of the deformed
conductor-insulator system $y(\bar \Omega)$. This \MMM framework \UUU corresponds to
the case when the complement $\R^d \setminus y(\bar \Omega) $ is assumed
to be 
conductive, which specifically refers to the setting of {\it
  capacitors}. Here, the relevant notion is that of {\it relative (or mutual)
  capacity} of two conductors $E$ and $\R^d\setminus D$, where $E$ is
compact and $D$ is open and contains $E$, which is specified as 
\begin{align}
    & 
    \capacity(E;D):=\inf\Bigg\{\int\limits_{D}|\nabla
      v(\xi)|^2\d\xi:\,v\in W^{1,2}_0(D)\label{eq:P2}\text{ and }v\geq 1\text{ a.e.~in a neighb. of }E\Bigg\}.
\end{align}
In this setting, we consider the electroelastic energy
 $$   \cF_2(y):=\int\limits_{\Omega}W(\UUU x,\EEE\nabla y(x))\d x+
 \frac{\UUU Q^2}{2\capacity(y(\oomega);y(\Omega))},$$
 again to be minimized on the class of admissible deformations $\cA$.


Our main result reads as follows.
\begin{Theorem}[\UUU Existence of equilibria]
\label{thm:main} \UUU $\cF_1$ and $\cF_2$ admit minimizers in $\cA$. \EEE
\end{Theorem}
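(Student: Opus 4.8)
The plan is to apply the direct method of the calculus of variations, the principal effort lying in the verification that the capacitary terms are suitably semicontinuous. First, I would note that $\cA$ is nonempty and that both functionals are finite at the identity: indeed $W(x,I)<\infty$ by \eqref{hp:growth} (since $\det I=1>0$), while $\capacity(\oomega)>0$ and $\capacity(\oomega;\Omega)>0$ because $\omega$ has nonempty interior. Moreover both functionals are bounded below, as $W\geq -1/c_W\in L^1(\Omega)$ and the capacitary terms are nonnegative. Fixing a minimizing sequence $(y_k)\subset\cA$, the bound on $\cF_i(y_k)$ together with \eqref{hp:growth} yields a uniform bound on $\|\nabla y_k\|_{L^q}$ and on $\int_\Omega |\nabla y_k|^{ds}/(\det\nabla y_k)^s\d x$, while the clamping $y_k=\mathrm{id}$ on $\Gamma_0$ and Poincar\'e's inequality control $\|y_k\|_{W^{1,q}}$. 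Since $q>d$, the Morrey embedding $W^{1,q}\hookrightarrow C^{0,1-d/q}$ and the Arzel\`a--Ascoli theorem furnish a (nonrelabeled) subsequence with $y_k\to y$ uniformly on $\bar\Omega$ and $y_k\wk y$ in $W^{1,q}$, and the weak continuity of minors under $q>d$ gives convergence of the relevant minors of $\nabla y_k$.

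Second, I would check that the limit $y$ is admissible. The boundary condition $y=\mathrm{id}$ on $\Gamma_0$ passes to the limit by uniform convergence. The delicate point is that $y$ is again a homeomorphism: here the distortion term $|F|^{ds}/(\det F)^s$ with $s>d-1$ enters, providing an $L^s$-bound on the distortion that keeps $\det\nabla y_k>0$ from degenerating and forces the weak limit of the homeomorphisms to remain injective and open. I would invoke the closedness of this class of finite-distortion deformations (in the spirit of the references collected in the paper), obtaining $y\in\cA$ and, in particular, that $y(\omega)$ is open, so that $\capacity(y(\oomega))>0$ and $\capacity(y(\oomega);y(\Omega))>0$ and the capacitary terms remain finite at the limit.

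Third, lower semicontinuity of the elastic energy $\int_\Omega W(x,\nabla y)\d x$ follows from polyconvexity \eqref{hp:poly} and the weak convergence of minors, by the classical Ball--M\"uller theory. It remains to treat the capacitary terms, which is the crux. For $\cF_1$ I would prove that capacity is \emph{upper} semicontinuous along the sequence: given a near-optimal potential $v$ for $y(\oomega)$ in \eqref{eq:P}, the set where $v\geq 1$ contains an open neighborhood of $y(\oomega)$, and uniform convergence forces $y_k(\oomega)\subset N_\delta(y(\oomega))$ for large $k$; hence $v$ is admissible for $y_k(\oomega)$ and $\limsup_k\capacity(y_k(\oomega))\leq\capacity(y(\oomega))$. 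Since $t\mapsto 1/t$ is decreasing and the capacities stay bounded below along the minimizing sequence, this upgrades to lower semicontinuity of $1/\capacity(y_k(\oomega))$, closing the argument for $\cF_1$.

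Finally, I expect the main obstacle to be the capacitary term of $\cF_2$, where the ambient domain $y_k(\Omega)$ varies together with the conductor $y_k(\oomega)$. Reusing the optimal potential $v\in W^{1,2}_0(y(\Omega))$ as a competitor for $\capacity(y_k(\oomega);y_k(\Omega))$ requires $\supp v\subset y_k(\Omega)$, which is \emph{not} guaranteed by one-sided Hausdorff containment. I would therefore establish the interior convergence $y(\Omega)\subset\liminf_k y_k(\Omega)$ in the Kuratowski sense: since $\partial(y_k(\Omega))=y_k(\partial\Omega)\to y(\partial\Omega)$ uniformly and any $K\Subset y(\Omega)$ satisfies $\dist(K,y(\partial\Omega))>0$, a degree or invariance-of-domain argument places $K$ inside $y_k(\Omega)$ for large $k$. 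With this, $v$ is admissible for the perturbed relative-capacity problems and the upper-semicontinuity scheme of the previous step applies, giving lower semicontinuity of $1/\capacity(y_k(\oomega);y_k(\Omega))$ and hence of $\cF_2$. In both cases the admissible limit $y$ attains the infimum, which proves the theorem.
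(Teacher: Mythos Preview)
Your overall strategy matches the paper's exactly: direct method, coercivity from \eqref{hp:growth} and the clamp on $\Gamma_0$, uniform convergence via Morrey, closedness of $\cA$ through the finite-distortion machinery collected in Proposition~\ref{prop:properties_deformations_homeo}, lower semicontinuity of the elastic energy from polyconvexity, and upper semicontinuity of the capacity along the sequence. For $\cF_1$ your test-function argument is complete and equivalent to what the paper does (the paper phrases it via monotonicity under outer smooth approximations $E_m\supset y(\oomega)$, cf.\ Proposition~\ref{prop:limsup-cap}, but that is the same mechanism).

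For $\cF_2$ there is a genuine gap. You correctly isolate the issue: to reuse a near-optimal $v\in W^{1,2}_0(y(\Omega))$ for $\capacity(y_k(\oomega);y_k(\Omega))$ one needs $v$ to live in $W^{1,2}_0(y_k(\Omega))$, and your Kuratowski/degree argument indeed gives $K\subset y_k(\Omega)$ for every compact $K\Subset y(\Omega)$ and large $k$. But a near-optimal $v\in W^{1,2}_0(y(\Omega))$ does \emph{not} have compact support in general, so this inclusion does not yet place $v$ in $W^{1,2}_0(y_k(\Omega))$; and if you approximate $v$ by $C^\infty_c(y(\Omega))$-functions you lose the constraint $v\geq 1$ near $y(\oomega)$. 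The missing step is precisely the inner continuity of relative capacity, $\capacity(E;D)=\inf_{\ell}\capacity(E;D_\ell)$ for $D_\ell\Subset D_{\ell+1}$ exhausting $D$, i.e.\ Proposition~\ref{prop:inf-cap-inside}. The paper proves this by writing the truncated near-optimal potential as $u=(u-\eta)+\eta$ with $\eta\in C^\infty_c(D)$, $\eta\equiv 1$ on $\bar E$; then $u-\eta$ vanishes q.e.\ on $E$, can be approximated in $W^{1,2}$ by functions supported in $D\setminus\bar E$, and adding $\eta$ back yields compactly supported competitors that are still $\equiv 1$ on $E$ with nearly the same Dirichlet energy. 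Once this reduction to compactly supported competitors is in place, your Kuratowski argument closes the proof for $\cF_2$, and the two routes coincide.
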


The proof of Theorem \ref{thm:main} \UUU is given in Section \ref{sec:proof-main}
below and \EEE hinges upon two main ingredients: \UUU (1) a {\it closure}
property of \EEE admissible
deformations \UUU $\cA$ \EEE and (2) \EEE an {\it upper-semicontinuity} \UUU
result \EEE for
the \UUU capacitary terms  \EEE under \UUU the \EEE  uniform
convergence of the deformations. \UUU These two ingredients in particular allow to
apply the Direct Method to $\cF_1$  and $\cF_2$ and secure the
existence of  minimizers.

\UUU For the sake of completeness, we also provide a {\it
  lower-semicontinuity} result for the capacitary terms so that, ultimately,
 \begin{align}
    &  \capacity(y(\bar{\omega}) )=\lim_{n\to+\infty}
      \capacity(y^n(\bar{\omega}) )  \ \text{and}\ 
      \capacity(y(\bar{\omega}); y(\Omega))=\lim_{n\to+\infty} \capacity(y^n(\bar{\omega}); y^n(\Omega)) \label{eq:cont_capacity}
      \end{align}
      whenever $y\colon \Omega \to \mR^d$
    and $\{y^n\}\subset C^0(\bar{\Omega};\mR^d)$
    are homeomorphisms such that $y^n\to y$ strongly in $C^0(\bar{\Omega};\mathbb{R}^d)$.
 Note however that lower
semicontinuity holds under some specific geometrical constraints on $y(\Omega)$.
We refer to Propositions \ref{prop:limsup-cap} and \ref{prop:lsc_cap}
below for the precise statements. 
The main technical tool for the proof is a detailed characterization
of the monotonicity behavior \UUU of the \EEE  capacity with respect
to its arguments, cf. Proposition~\ref{prop:cap_prop}.

\UUU Before closing this introduction, let us remark that existence
results in the setting of electroelastostatics are not new. The
equilibrium of an electromagnetoelastic polyconvex material in void
has been already investigated in the 
\cite{Silhavy1}. The main tool there is the careful use of
{A}-quasiconvexity, related to relaxation
under linear   differential
constraints. In this specific case, such constraints naturally correspond to the static Maxwell equations. 
A sufficient condition for the polyconvexity of isotropic  electromagnetoelastic
energy densities is given in 
\cite{Silhavy2}. In contrast with our setting, the formulation in
\cite{Silhavy1,Silhavy2} is purely Lagrangian and no charge is
considered.

The variational modelization in \cite{Ortigosa0,Ortigosa1,Ortigosa2} moves
along the same lines of \cite{Silhavy1}, allowing charges and assuming
the conductor to be surrounded by a polymeric matrix, as in our
case. Let us note however, that the focus there is on modelization and
simulation. In particular, no 
existence result for equilibria is provided.

In the series of papers \cite{Novaga0,Novaga00,Novaga1,Novaga2} the
authors analyze the 
equilibrium shape of two-dimensional charged, perfectly conducting
liquid drops. There, a variational energy of the type of $\cF_1$ is studied, where
nonetheless the elastic part is replaced by the perimeter of the
liquid drop. Under different settings, existence for the corresponding
minimization problem may hold or fail in different classes of shapes.

As already mentioned, our variational model is of mixed 
\OOO
Eulerian--Lagrangian
\UUU
type, a class which has recently attracted attention due to its
relevance in connection with multiphysics applications. Without any
claim of completeness, let us recall that the mathematical analysis
of mixed
\OOO
Eulerian--Lagrangian
\MMM formulations \UUU have been considered in the modelization of
defective crystals \cite{dacorogna.fonseca,fonseca.parry}, in the
setting of nematic elastomers \cite{barchiesi.desimone,
  barchiesi.henao.moracorral}, in dislocation-free finite plasticity
\cite{stefanelli, kruzik.melching.stefanelli}, in bulk-damage modeling
\cite{davoli.kruzik.pelech}, and in magnetostriction
\cite{rybka.luskin, kruzik.stefanelli.zeman}. \EEE Dimension reduction in nonlinear magnetoelasticity has been studied analytically and numerically in \cite{liakhova, liakhova.luskin.zhang, luskin.zhang} under further restrictions on the 
\OOO 
Jacobian
\EEE 
of elastic deformations. The membrane and Von K\'arm\'an regimes are the subject of \cite{davoli.kruzik.piovano.stefanelli} and \cite{bresciani}, respectively. For energy functionals featuring both bulk and surface terms, as well as for refined phase-field models, we refer to \cite{javili.mcbride.steinmann}, \cite{levitas}, and to the two recent contributions \cite{GraKruMaiSte2019, GraKruMaiSte2019-2}.

\EEE

Our paper is organized as follows. In Section \ref{sec:prel}, we
introduce notation and recall results on Sobolev spaces with zero
traces and on smooth approximations of sets. Section \ref{sec:adm_def}
\UUU discusses the properties of \EEE admissible deformations, as well
as a connection with the theory of mappings with finite
distortion. Section \ref{sec:capacity} \MMM analyzes upper semicontinuity
of the capacity and Section \ref{sec:proof-main} contains the proof of Theorem \ref{thm:main}. Eventually, Section \ref{sec:capacity-new} completes our study of continuity properties for capacitary terms and provides a discussion on the geometry of deformed sets, cf. Subsection \ref{subs:geo}. \EEE


\section{\UUU Notation and preliminaries}
\label{sec:prel}


In this section, we collect definitions, notation, and preliminary
results which will be used throughout the paper.

In the following, 
$\Omega$ is a nonempty, simply connected, bounded Lipschitz domain in $\mR^d$, $\omega$ is a compactly contained subdomain of $\Omega$, 
and
$\Gamma_0$ is a subset of $\partial \Omega$ with $\cH^{d-1}(\Gamma_0) > 0$,
where 
$\cH^{d-1}$ stands for the $(d-1)$-Hausdorff measure.
By Sobolev embedding theorem, \UUU given \EEE $y\in W^{1,q}(\Omega, \mR^d)$, $q>d$,
we \UUU can consider its \EEE continuous up to the boundary representative \OOO $\tilde{y}\in C^0(\bar{\Omega},\mR^d)$. \EEE
Therefore, the boundary condition
$y|_{\Gamma_0} = \textrm{id}$
\UUU is interpreted as \EEE $\tilde{y}(x) = x$
for every $x\in \Gamma_0$.

\OOO
Unless otherwise stated, throughout the paper
\UUU 
we will use the symbol $C$ to indicate any
generic positive constant, possibly depending on data, and changing
even within the same line. \EEE



\subsection{\EEE Sobolev spaces with \UUU vanishing trace}\label{sec:Sobolev}

\OOO 
In what follows,
\UUU Sobolev functions vanishing at the boundary of
the deformed set $y(\Omega)$ will turn out to be relevant. \EEE 
 The boundary of the set $y(\Omega)$ may, in fact, \UUU
show poor regularity. We hence need to introduce a characterization of
 Sobolev spaces with vanishing trace at the boundary of the \EEE set $y(\Omega)$.

\UUU Given a domain $D\UUU \subset \R^d$, different definitions of
spaces with vanishing trace at the boundary $\partial D$ can be
considered. One possibility is letting  $W^{1,2}_0(D)$ be \EEE the space defined as the closure of $C_0^{\infty}(D)$ in the $W^{1,2}$-norm.
\UUU An alternative is defining \EEE  $\accentset{\,\circ}{W}^{1,2}
(D)$ \UUU to \EEE be the set of functions in $W^{1,2}(\mR^d)$ 
that are equal to zero a.e.~in $\mR^d \setminus D$.
It follows directly from \UUU these  definitions \EEE that 
\begin{equation}\label{sobolev_inclusion}
    W^{1,2}_0(D) \subset \accentset{\,\circ}{W}^{1,2} (D).
\end{equation}
The opposite inclusion, and hence the equality of these two spaces,
holds \UUU for domains \EEE with $C^0$ boundaries. \UUU Note however
that such continuity is difficult to ascertain a priori, for \EEE even
the image of a smooth set via a homeomorphism might, in principle,
\UUU have no \EEE $C^0$ boundary. We refer
to~\cite{ChaWilHewMoi2017} for a detailed discussion \UUU of \EEE this topic.

\subsection{Smooth approximation of sets}\label{sec:smooth_approx}
\UUU The approximation theory from \cite{BalZar2017} entails that,  \EEE
for every open set $A\subset \mR^d$, $\bar A\neq \mR^d$, there exist a
constant $\ep_0>0$ and a collection of $C^{\infty}$-smooth open sets
$\{A_\ep\}_{0<\ep<\ep_0}$ approximating $A$ from inside \UUU in the
following sense \EEE
\begin{center}
    $\bigcup_{0<\ep<\ep_0} A_{\ep} = A$
    and
    $\overline{A_{\ep}} \subset A_{\ep'}$
    if $0<\ep'<\ep<\ep_0$.
\end{center}
These sets are classically \UUU defined by means of \EEE so-called 
{\it thinnings} of $A$, \UUU namely, \EEE
\begin{equation}
\label{eq:thinning}
\begin{aligned}
    A_{\ep}& :=\left\{x\in\mR^d: \widetilde{\dist}(x, \mR^d\setminus A)>\ep\right\},
\end{aligned}
\end{equation}
where $\widetilde{\dist}$ is a \UUU suitably \EEE regularized distance function, see~\cite[Remark 5.5]{BalZar2017}.

Let $K\subset \mR^d$ be a compact set with a nonempty interior. 
Consider the approximations $B_{\ep}$ of the open set $B:=\mR^d
\setminus K$ \UUU as in \eqref{eq:thinning}. \EEE
Clearly, 
\begin{equation}
\label{eq:thickeing}
    K^{\ep}\coloneqq\mR^d \setminus B_{\ep}=
    \left\{x\in\mR^d: \widetilde{\dist}(x, K)\leq\ep\right\}
\end{equation}
are compact sets with $C^{\infty}$-boundary, approximating $K$ from
outside \UUU in the following sense \EEE
\begin{center}
    $\bigcap_{0<\ep<\ep_0} K^{\ep} = K$
    and 
    ${K^{\ep'}} \subset {\rm int }\,K^{\ep}$
    if $0<\ep'<\ep<\ep_0$.
\end{center}

\section{\UUU Closure of admissible deformations}
\label{sec:adm_def}


We gather here some basic definitions and preliminaries from the
setting of quasiconformal analysis \UUU and comment on the closure of
the set $\cA$ of admissible deformations \eqref{cA} under uniform energy bounds.  \EEE

\begin{Definition}[\UUU Finite distortion]\label{def:FD}
	Let
	$f\colon\Omega\to \mR^d$
	be such that
	$f \in W^{1,1}_{\loc}(\Omega;\R^d)$
	and 
	$\det \nabla f (x) \geq 0$ for almost every $x\in \Omega$. 
	We say that $f$ is a 
	{\it mapping with finite distortion} if 
	for almost every 
	$x\in \Omega$
	it holds that 
	$\nabla f(x)=0$
    whenever
	$\det \nabla f (x) = 0$. 
    The function
    \begin{equation}\label{def:outer_inner_distortion}
    	K_{f,p}(x):=\begin{cases}\displaystyle\frac{|\nabla f(x)|}{\det \nabla f (x)^{1/p}}&\text{if }0 < \det \nabla f (x) < \infty,\\
    	0&\text{otherwise},\end{cases} 
    \end{equation}
    is called the \textit{outer distortion operator function} or \textit{outer $p$-distortion} of $f$
    at  %
    $x\in \Omega$.
\end{Definition}

The special \UUU case \EEE  $p=d$ \UUU in the definition above is one of the primary focuses of
quasiconformal analysis and  is \EEE analyzed in
\cite{IwaSve1993,VodGold1976}. If $K_{f,d}\in L^{\infty}(\Omega)$,
Definition~\ref{def:FD} \UUU corresponds to that of {\it quasiregular
  mappings}, also known as mappings with {\it bounded distortion}.
The general case in which $1\leq p<+\infty$, \UUU possibly with $p\not
= d$ is addressed \EEE in \cite{Vod2012} in connection with the study of the functional classes that preserve Sobolev mappings under change of variables.
We refer to~\cite{Resh1982,Rick1993} and to the monographs~\cite{HenKos2014,IwaMar2001} for overviews on the topics of mappings with bounded and finite distortion, respectively.

\begin{Definition}[\UUU Discrete and open maps]
We say that a continuous mapping 
$f\colon D \to D'$ 
is
{\it discrete} 
if 
$f^{-1}(y)$ 
is a discrete set for all $y \in D'$. \UUU If  $f(U)$ is open for
every open set $U\subset D$ we say that $f$ is {\it open}.  
\end{Definition}



The main properties of admissible deformations \UUU $\cA$ \EEE are collected in the next proposition.

\begin{Proposition}[\UUU Properties and closure of $\cA$]
\label{prop:properties_deformations_homeo} 
\OOO
    Let $y\colon \Omega \to \mR^d$ be such that $y|_{\Gamma_0}={\rm id}$ and
    \UUU
    $\cF_i(y)<\infty$, either for $i=1$ or $i=2$. \EEE
    Then, 
    \begin{enumerate}[label={$(\rm \roman*)$}]
        \item $y$ has finite distortion;
        \item $K_{y,d} \in L^{ds}(\Omega)$;
        \item $\det \nabla y >0$ a.e. in $\Omega$;  
\item $y$ is continuous, open, and discrete;
        \item $y$ satisfies the Lusin $\cN$ and $\cN^{-1}$ conditions.
    \end{enumerate}
    If, in addition, 
    $y$ is a weak limit of $ W^{1,q}$-homeomorphisms,
    then
    \begin{enumerate}[label={$(\rm \roman*)$}]\setcounter{enumi}{5}
        \item $y$ is injective a.e., both in \UUU the \EEE image and
          in \UUU its \EEE domain;
        \item $y$ is a homeomorphism.
    \end{enumerate}
    In particular, if \UUU $y^n\in \cA$ \EEE converge weakly to $y$ in
    $W^{1,q}(\Omega;\mR^d)$, then \UUU $y\in \cA$ as well. \EEE
\end{Proposition}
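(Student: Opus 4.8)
The plan is to read off the pointwise consequences of the finite-energy bound through the growth assumption \eqref{hp:growth}, and then feed them into the regularity theory for mappings of finite distortion, reserving the genuinely global statements (vi)--(vii) for the limiting structure. Throughout, since the capacitary term is nonnegative, $\cF_i(y)<\infty$ (for $i=1$ or $i=2$) implies $\int_\Omega W(x,\nabla y)\,\d x<\infty$, so only the elastic part enters (i)--(v).

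First I would exploit this finiteness. Because $W(x,\cdot)=+\infty$ on $\{\det\nabla y<0\}$, finiteness of $\int_\Omega W(x,\nabla y)\,\d x$ forces $\det\nabla y\ge 0$ a.e.; the coercive term $c_W|F|^q$ gives $y\in W^{1,q}(\Omega;\R^d)$ (in particular $y\in W^{1,1}_{\loc}$), and the term $c_W|F|^{ds}/(\det F)^s$ yields $\int_\Omega K_{y,d}^{ds}\,\d x<\infty$, which is (ii). On $\{\det\nabla y=0\}$ that last integrand equals $+\infty$ unless $\nabla y=0$; since the integral is finite, $\nabla y=0$ a.e.\ there, so $y$ has finite distortion in the sense of Definition~\ref{def:FD}, proving (i). Continuity of $y$ (the first half of (iv)) is immediate from the Morrey embedding $W^{1,q}\hookrightarrow C^0$ with $q>d$, and the clamping $y|_{\Gamma_0}=\mathrm{id}$ with $\cH^{d-1}(\Gamma_0)>0$ shows $y$ is non-constant.

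Next I would invoke the theory of mappings of finite distortion. Since $s>d-1$ we have $K_{y,d}\in L^{ds}$ with $ds>d-1$, above the threshold guaranteeing that a non-constant $W^{1,d}_{\loc}$ mapping of finite distortion is open and discrete with $\det\nabla y>0$ a.e.; this settles (iii) and the remainder of (iv). For (v), condition $\cN$ follows from the classical Marcus--Mizel result for $W^{1,q}$ maps with $q>d$, while $\cN^{-1}$ is obtained by combining $\cN$ (which validates the area formula) with $\det\nabla y>0$ a.e.: for a null set $A$ one gets $\int_{y^{-1}(A)}\det\nabla y\,\d x=\int_A N(z)\,\d z=0$, whence $|y^{-1}(A)|=0$.

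For the remaining claims I would use that $y$ is a weak $W^{1,q}$ limit of homeomorphisms. Claim (vi), a.e.\ injectivity in domain and image, is exactly the content of the known closure results for Sobolev homeomorphisms of finite distortion in this regularity range, which I would quote. The delicate point — and the main obstacle — is (vii): upgrading a.e.\ injectivity to a genuine homeomorphism. Here I would combine the openness and discreteness of $y$ with a degree argument. For a sense-preserving open discrete map the multiplicity $N(z)$ agrees a.e.\ with the topological degree, and a.e.\ injectivity forces $N\equiv 1$ a.e.; moreover the degrees of the approximating homeomorphisms $y^n$ take values in $\{0,1\}$ and are stable under the uniform convergence $y^n\to y$, which rules out branching and yields global injectivity, an injective continuous open map being a homeomorphism onto its image by invariance of domain (with continuous inverse supplied by openness). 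Finally, to conclude that a weak limit $y$ of a sequence $y^n\in\cA$ lies in $\cA$, I would note that along the energy-bounded sequences relevant to the Direct Method the polyconvexity assumption \eqref{hp:poly} gives $\int_\Omega W(x,\nabla y)\,\d x\le\liminf_n\int_\Omega W(x,\nabla y^n)\,\d x<\infty$, so that all of (i)--(vii) apply; the constraint $y|_{\Gamma_0}=\mathrm{id}$ passes to the limit by uniform convergence, and hence $y\in\cA$.
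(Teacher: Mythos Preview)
Your argument is correct and runs parallel to the paper's proof: both extract (i)--(ii) directly from the growth condition \eqref{hp:growth}, then feed the integrability of the distortion into the finite-distortion machinery for (iii)--(v), and quote closure results for (vi). The one genuine difference is in step (vii). The paper does not re-invoke the approximating homeomorphisms or a degree argument; instead it observes that the Lusin $\cN$-property, a.e.\ injectivity, and openness together already force $y$ to be a homeomorphism, citing \cite[Lemma~3.3]{GraKruMaiSte2019}. Your degree-stability route also works, but it is more elaborate and re-uses the limit structure; the paper's route has the advantage of depending only on the intrinsic properties (i)--(vi) of $y$.

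One small slip: the relevant openness/discreteness threshold (Manfredi--Villamor, \cite{ManVill1998}) is that the classical outer distortion $K_{y,d}^{\,d}$ lie in $L^{s}$ with $s>d-1$, which is exactly the paper's hypothesis, not ``$K_{y,d}\in L^{ds}$ with $ds>d-1$'' as you write. Since $s>d-1$ is assumed, your conclusion is unaffected. Your final remark, that the ``In particular'' clause implicitly requires finiteness of the elastic energy of $y$ via polyconvex lower semicontinuity along the sequence, is a valid point the paper leaves to the proof of Theorem~\ref{thm:main}.
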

\begin{proof}
Properties (i) and (ii) follow immediately from \eqref{hp:growth}.
The proof of property (iii) for $W^{1,d}$-mappings with $K_{y,d} \in L^{ds}(\Omega)$, $s>d-1$, may be found in \cite[Theorem 1.1]{KosMal2003}.
\OOO
Continuity, openness, and discreteness for mappings with bounded distortion have been obtained in the seminal paper \cite{Resh1967-2}. 
Concerning mappings with finite distortion, it was shown in  \cite[Theorem 2.3]{VodGold1976}  that
$W^{1,d}_{\loc}$-mappings 
with finite distortion have continuous representatives. 
Now, since $y$ is non-constant by $y|_{\Gamma_0}={\rm id}$, then due to
\cite{ManVill1998}, (i) and (ii) imply that $y$ is open
and discrete, i.e., \UUU Property (iv) holds. \EEE
Property  (v) is a consequence of \cite[Proposition 2.4]{VodGold1976},
\OOO see \cite{MarMiz1973}.
\EEE
We refer the reader to \cite{HenKos2014}, where all the aforementioned
results and their consequences are \UUU discussed. \EEE

Property (vi) for limits of Sobolev homeomorphisms follows by \cite{BouHenMol2019}, see also \cite[Lemma 3.4]{MulSpe1995} and \cite[Theorem 10]{MolVod2020}.
Finally, the Lusin $\cN$-property, a.e.\ injectivity, and openness guarantee that $y$ is a homeomorphism, see, for example,  \cite[Lemma~3.3]{GraKruMaiSte2019}.
\end{proof}


\section{Capacity: main properties and upper semicontinuity}\label{sec:capacity}

Variational capacity is one of the main tools in nonlinear potential theory, see \cite{HeiKilMar2006}.
\UUU It delivers an \EEE essential understanding \UUU of \EEE the pointwise 
\OOO 
behavior
\EEE 
in the Sobolev setting, for it measures,  
roughly speaking,  the size of exceptional sets associated to Sobolev functions.
We refer the interested reader to \cite[Chapter 2]{HeiKilMar2006} and \cite{DalMaso} for a thorough discussion of the notion of capacity, as well as to \cite[Chapter 4]{HeiKilMar2006} for an overview on fine properties of Sobolev functions. We recall some basic properties below.

\begin{Definition}\label{def:capacity}
  Let \UUU $F,\,E\subset \R^d$ be compact with $E \subset D \subset
  \R^d$ 
  \OOO
  and
  \UUU 
  $D$ open. The
    \textit{capacity} of $F$ and the \textit{capacity of $E$ relative
    to  $D$} are \EEE defined by 
    \begin{align}
    \UUU  \capacity\,(F)&\UUU :=\inf\left\{\int\limits_{\R^d}|\nabla v(\xi)|^2\d\xi : v\in\cC_1(F)\right\},\label{eq:def-capacity0}\\
        \capacity\,(E;D)&:=\inf\left\{\int\limits_{D}|\nabla
      v(\xi)|^2\d\xi : v\in \UUU \cC_2\EEE (E;D)\right\},  \label{eq:def-capacity}   
    \end{align}   
    where
\begin{align*}
    \UUU  \cC_1(F)&\UUU:=\{\,v\in L^{2^*}(\R^d):\ \nabla v \in
  L^2(\R^d;\R^d), \ v\geq 1\text{ a.e.~in a
  neighb. of }F\},\\
 \UUU \cC_2(E;D)&\UUU:=\{\,v\in W^{1,2}_0(D):\ v\geq 1\text{ a.e.~in a neighb. of }E\}.
    \end{align*}
    Functions in \UUU $\cC_1(F)$ or $\cC_2(E;D)$ \EEE  are called \textit{\UUU capacity test \EEE functions}.
    We say that a property holds \textit{quasieverywhere}
    (\textit{q.e}), if it holds everywhere except from a set of
    zero capacity, and that a function is \emph{quasicontinuous} on
    $D$ if its discontinuity set in $D$ has zero capacity.
\end{Definition}

The next proposition collects some \UUU basic \EEE properties of Sobolev functions related to the notion of capacity (see \cite[Chapter 4]{HeiKilMar2006}).
\begin{Proposition}[Fine properties of Sobolev functions]
\label{prop:fine_properties}
    Let $D\subset \mathbb R^d$ be an open set. 
    \begin{enumerate}[label={$(\rm \roman*)$}]
        \item A function $v\in W^{1,2}_0(D)$ has a quasicontinuous representative $\tilde{v}$, uniquely 
        \OOO
        defined~q.e.
        \EEE
        \item Every strongly convergent sequence in $W^{1,2}(\mathbb R^d)$
            \UUU admits a 
            \OOO 
            q.e.\ convergent 
            \EEE subsequence in $\mathbb{R}^d$.
        \item 
            A function $u\in W^{1,2}(D)$ belongs to $W^{1,2}_0(D)$ if and only if its quasicontinuous representative  $\tilde{u}$ is the restriction to $D$ of a quasicontinuous map satisfying
$\tilde{u}=0$ q.e.\ on $\mathbb R^d \setminus D$.
    \end{enumerate}
\end{Proposition}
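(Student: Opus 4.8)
The plan is to reduce all three statements to a single master estimate of weak type for the capacity, namely that there is a constant $C>0$ such that
$$\capacity(\{x : |\tilde w(x)| > \lambda\}) \le \frac{C}{\lambda^2}\,\|w\|_{W^{1,2}(\R^d)}^2$$
for every $w \in W^{1,2}(\R^d)$ admitting a quasicontinuous representative $\tilde w$ and every $\lambda > 0$. This bound is obtained directly from the definition \eqref{eq:def-capacity0}, using $\lambda^{-1}|w|$ (suitably mollified) as a competitor in the infimum defining $\capacity(\{|\tilde w|>\lambda\})$. Together with the countable subadditivity of $\capacity$, the estimate yields a Borel--Cantelli principle: whenever a sequence $w_k \in W^{1,2}(\R^d)$ satisfies $\sum_k \|w_{k+1}-w_k\|_{W^{1,2}(\R^d)} < \infty$, the representatives $\tilde w_k$ converge \emph{quasiuniformly}, that is, uniformly outside an open set of arbitrarily small capacity, and in particular q.e. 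I would prove the three properties in the logical order (ii), (i), (iii), which differs from their numbering.

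Property (ii) follows at once: given a sequence converging strongly in $W^{1,2}(\R^d)$, one extracts a subsequence along which consecutive differences decay geometrically, so that the summability hypothesis above holds, and the Borel--Cantelli principle then delivers q.e.\ convergence of the quasicontinuous representatives along this subsequence. For Property (i), given $v \in W^{1,2}_0(D)$ I would choose $\varphi_k \in C_0^\infty(D)$ with $\varphi_k \to v$ in $W^{1,2}$, pass to a fast subsequence, and invoke quasiuniform convergence: the $\varphi_k$ are continuous, hence quasicontinuous, and a quasiuniform limit of quasicontinuous functions is again quasicontinuous, so the limit $\tilde v$ is the desired representative and coincides a.e.\ with $v$. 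Uniqueness q.e.\ is immediate from the master estimate: if $\tilde v_1, \tilde v_2$ are two quasicontinuous representatives of the same Sobolev function, then $w := \tilde v_1 - \tilde v_2$ has $\|w\|_{W^{1,2}(\R^d)} = 0$, so $\capacity(\{|w|>\lambda\}) = 0$ for every $\lambda>0$, and subadditivity forces $w = 0$ q.e.

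Property (iii) splits into two implications. The forward direction reuses the approximation of the previous step: if $u \in W^{1,2}_0(D)$ and $\varphi_k \in C_0^\infty(D)$ approximate it with $\varphi_k \to \tilde u$ q.e., then, since each $\varphi_k$ vanishes on $\R^d \setminus D$, the quasicontinuous representative $\tilde u$ of the zero-extension vanishes q.e.\ there. For the converse I would first reduce to a model case by truncation: writing $u = u^+ - u^-$ and replacing $u^\pm$ by $\min(u^\pm, M)$, one reduces to a bounded, nonnegative $u$ whose quasicontinuous representative vanishes q.e.\ outside $D$, since truncation is continuous in $W^{1,2}$, preserves the q.e.\ vanishing, and recovers $u$ as $M \to \infty$. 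For such $u$ one approximates from below by $(u-\varepsilon)_+$, whose set of positivity meets $\R^d \setminus D$ only in a set of zero capacity; removing this negligible boundary layer by a capacitary cutoff and truncating at large radius reduces matters to a function compactly supported in $D$, which is then mollified to produce the sought $C_0^\infty(D)$ approximants.

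I expect this converse implication in (iii) to be the main obstacle, precisely because it requires turning the \emph{pointwise} (q.e.) vanishing of $\tilde u$ on the complement into an \emph{approximation} by compactly supported smooth functions: the difficulty is concentrated in the zero-capacity boundary exceptional set, where the cutoff must be performed with controlled $L^2$-gradient, and this is where the construction via capacitary potentials genuinely enters. All of the above is classical in nonlinear potential theory; a complete treatment, including the delicate approximation step, is given in \cite[Chapter~4]{HeiKilMar2006}, to which I defer for the technical details.
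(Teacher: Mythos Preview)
Your sketch is correct and follows the standard route in nonlinear potential theory, but note that the paper does not actually prove this proposition: it is stated without proof as a collection of known facts, with the surrounding text simply pointing to \cite[Chapter~4]{HeiKilMar2006}. Your proposal thus goes well beyond what the paper does, supplying an outline of the classical argument (weak-type capacitary estimate, Borel--Cantelli for quasiuniform convergence, then (ii)$\Rightarrow$(i)$\Rightarrow$(iii)) before deferring to the very same reference; there is nothing to compare on the paper's side.
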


The notion of \UUU relative \EEE capacity can be equivalently
reformulated as follows.

\begin{Proposition}[\UUU Equivalent formulations]\label{prop:cap_qe}
Let \UUU $F, \, E \subset \R^d$ be compact with  $E\subset D\subset
\mR^d$ with $D$ open and bounded. \EEE
Then
\begin{align}
 \UUU \capacity\,(F)&= \UUU \inf\left\{\int\limits_{\R^d}|\nabla v(\xi)|^2\d\xi :
                 v\in\tilde \cC_1(F)\right\}, \label{eq:def-capacity-qe0}\\
    \capacity\,(E;D)&=\inf\left\{\int\limits_{D}|\nabla v(\xi)|^2\d\xi :  \UUU v\in\tilde \cC_2(E;D)\right\},   \label{eq:def-capacity-qe}
\end{align}  
where
\begin{align*}
  \UUU\tilde \cC_1(F)&\UUU:=\{v\in L^{2^*}(\R^d):\ \nabla v \in
                   L^2(\R^d;\R^d), \\
                 &\qquad \UUU \text{and its quasicontinuous representative
                   $\tilde v$ is such that $\tilde v\geq 1$ q.e. in $F$}
                   \},\\
 \UUU \tilde \cC_2(E;D)&\UUU:=\{v \in W^{1,2}_0(D)\\
  &\qquad \text{and its quasicontinuous representative
                   $\tilde v$ is such that $\tilde v\geq 1$ q.e. in $E$}
                   \}.
\end{align*}
\end{Proposition}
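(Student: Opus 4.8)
The plan is to prove both equivalences \eqref{eq:def-capacity-qe0} and \eqref{eq:def-capacity-qe} simultaneously, since they have identical structure: the only difference is the choice of ambient space ($\R^d$ versus $D$) and the admissible class. The key observation is that enlarging the class of test functions can only decrease (or preserve) the infimum, while a density/approximation argument recovers the reverse inequality. First I would establish the trivial inclusions $\cC_1(F)\subset\tilde\cC_1(F)$ and $\cC_2(E;D)\subset\tilde\cC_2(E;D)$: indeed, if $v\geq 1$ a.e.\ in a neighborhood of the compact set, then on that open neighborhood the quasicontinuous representative $\tilde v$ agrees with $v$ q.e.\ (a standard fact, since two representatives of a Sobolev function coincide q.e.), and hence $\tilde v\geq 1$ q.e.\ there, in particular q.e.\ on the compact set itself. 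This immediately yields the inequalities
\begin{align*}
\inf\left\{\int_{\R^d}|\nabla v|^2\d\xi : v\in\tilde\cC_1(F)\right\}&\leq\capacity(F),\\
\inf\left\{\int_{D}|\nabla v|^2\d\xi : v\in\tilde\cC_2(E;D)\right\}&\leq\capacity(E;D),
\end{align*}
because the infimum over a larger set is smaller.

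The substance of the proof lies in the reverse inequalities, for which I would take an arbitrary competitor $v$ in the enlarged class (with $\tilde v\geq 1$ q.e.\ on the compact set) and produce, at the cost of an arbitrarily small increase in energy, a competitor in the smaller class (with $v\geq 1$ a.e.\ in an open neighborhood). The standard device is truncation combined with a capacitary-potential trick. Concretely, fix $\delta>0$ and consider the set $\{\tilde v>1-\delta\}$; by quasicontinuity this is, up to a set of small capacity, an open set containing the compact target q.e. One then adds a small multiple of a capacity test function for the exceptional set of small capacity to lift $\tilde v$ above $1$ genuinely on an open neighborhood, while using the definition of capacity to control the additional Dirichlet energy by $C\delta$. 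Passing $\delta\to 0$ recovers the infimum. Alternatively, and perhaps more cleanly, I would invoke Proposition~\ref{prop:fine_properties}(i),(iii): the quasicontinuous representative characterizes membership in $W^{1,2}_0(D)$ via its q.e.\ behavior, so the enlarged class $\tilde\cC_2(E;D)$ is exactly the class of Sobolev functions whose fine (quasicontinuous) representative exceeds $1$ on $E$, and a truncation argument shows this infimum equals the one taken over functions exceeding $1$ on a neighborhood.

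I expect the main obstacle to be making the neighborhood-recovery step rigorous, namely passing from the pointwise q.e.\ condition $\tilde v\geq 1$ q.e.\ on the compact set to an honest a.e.\ condition on an open neighborhood without losing control of the Dirichlet integral. The technical heart is that the exceptional set where $\tilde v<1$ inside the target has zero capacity, so it can be absorbed by a test function of arbitrarily small energy; combining this with the lower semicontinuity of the Dirichlet integral along the approximating sequence yields the matching bound. Once this approximation lemma is in place, both \eqref{eq:def-capacity-qe0} and \eqref{eq:def-capacity-qe} follow by sandwiching the two infima. I would write the argument once for the relative capacity and remark that the self-capacity case is identical, replacing $W^{1,2}_0(D)$ with the space $\{v\in L^{2^*}(\R^d):\nabla v\in L^2(\R^d;\R^d)\}$ and the corresponding quasicontinuity theory on all of $\R^d$.
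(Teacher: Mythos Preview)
The paper does not supply its own proof of Proposition~\ref{prop:cap_qe}; the result is stated without argument, as a standard fact from nonlinear potential theory (the references \cite{HeiKilMar2006} and \cite{DalMaso} cited nearby contain it). So there is nothing in the paper to compare your proposal against.

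That said, your outline is correct and is essentially the textbook argument. The inclusion $\cC_i\subset\tilde\cC_i$ and the resulting easy inequality are exactly as you say. For the reverse inequality, the ``capacitary-potential trick'' you describe is the standard device: given $v\in\tilde\cC_2(E;D)$, first truncate so that $0\leq v\leq 1$ (this does not increase the Dirichlet integral), then note that the exceptional set $N:=\{x\in E:\tilde v(x)<1\}$ has zero capacity; by outer regularity of capacity one finds an open set $U\supset N$ with a test function $w\in\cC_2(\overline U;D)$ of arbitrarily small Dirichlet energy, and then $v+w\geq 1$ a.e.\ on the open set $\{\tilde v>1-\delta\}\cup U$, which is a genuine neighborhood of $E$ once one accounts for quasicontinuity. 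Letting the energy of $w$ and $\delta$ tend to zero yields the matching bound. One small point to tighten: quasicontinuity alone gives that $\{\tilde v>1-\delta\}$ is \emph{quasi-open}, not open, so you do need the auxiliary open set $U$ of small capacity (covering both $N$ and the non-openness defect) rather than relying on $\{\tilde v>1-\delta\}$ directly; your sketch already hints at this but the two roles of $U$ should be merged carefully. With that adjustment the argument goes through, and the self-capacity case \eqref{eq:def-capacity-qe0} is indeed identical upon replacing $W^{1,2}_0(D)$ by the homogeneous space on $\R^d$.
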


The behavior of capacity with respect to \UUU set inclusion \EEE is
encoded \UUU in \EEE the next proposition. 

\begin{Proposition}[Monotonicity properties of the capacity]
\label{prop:cap_prop} \EEE Let $F, \, F_k, \, E, \, E_k\subset \R^d$ be
compact and $D_k$ be bounded and open \EEE  for every $k\in\mN$. 
\UUU The \EEE following monotonicity properties \UUU hold: \EEE
    \begin{enumerate}[label={$(\rm \roman*)$}, ref=\ref{prop:cap_prop}\,(\roman*)]
        \item\label{prop:cap_monot_set}
        If \UUU $F_1 \subset F_2$ and \EEE $E_1 \subset E_2 \subset D$, then \UUU $\capacity(F_1) \leq
        \capacity(F_2)$ and \EEE $\capacity(E_1;D) \leq \capacity(E_2;D)$.
        \item\label{prop:cap_monot_domain} If $E \subset D_1 \subset D_2$, then $\capacity(E;D_2) \leq \capacity(E;D_1)$.
        \item\label{prop:inf-cap-outside} If $E = \bigcap_{k=1}^{+\infty} E_k$ with $E_{k+1} \subset E_{k} \subset D$
        and
        $E_k$ is compact for every $k\in \mN$, then 
        \begin{align*}
          \UUU  \capacity(E) &\UUU = \lim\limits_{k\to\infty}\capacity(E_k)= \inf\limits_{k\to\infty}\capacity(E_k),\\
            \capacity(E;D) &= \lim\limits_{k\to\infty}\capacity(E_k;D)= \inf\limits_{k\to\infty}\capacity(E_k;D).
        \end{align*}
        \item \label{prop:sup-cap-inside}
        If $E= \bigcup_{k=1}^{+\infty} E_k$ with $E_k \subset E_{k+1} \subset E \subset D$ for every $k\in \mathbb{N}$, then 
        \begin{align*}
           \capacity(E) &= \lim\limits_{k\to\infty}\capacity(E_k)= \sup\limits_{k\to\infty}\capacity(E_k),\\
            \capacity(E;D) &= \lim\limits_{k\to\infty}\capacity(E_k;D)= \sup\limits_{k\to\infty}\capacity(E_k;D).
            \end{align*}
          \item \label{prop:inf-cap-inside}
        If $D=\bigcup_{k=1}^{+\infty}D_k$ and $E$ is compact, with $E\subset D_k\subset\subset D_{k+1}\subset\subset D$ for every $k\in \mathbb{N}$, then
        \begin{equation*}
            \capacity (E;D)=
            \lim\limits_{k\to\infty}\capacity(E;D_k)=
            \inf_{k\in \mathbb{N}}\capacity(E;D_k).
        \end{equation*}
    \end{enumerate}
\end{Proposition}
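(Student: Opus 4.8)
The plan is to deduce all five statements from two elementary observations about how the admissible classes change with the data, reserving the genuine analysis for the two continuity-from-below assertions. For (i), note that if $F_1\subset F_2$ then every neighborhood of $F_2$ is a neighborhood of $F_1$, so $\cC_1(F_2)\subset\cC_1(F_1)$; taking the infimum over the smaller class gives $\capacity(F_1)\leq\capacity(F_2)$, and the verbatim argument with $\cC_2$ yields $\capacity(E_1;D)\leq\capacity(E_2;D)$. For (ii), extending a function $v\in\cC_2(E;D_1)$ by zero produces a function in $\cC_2(E;D_2)$ with identical Dirichlet integral (because zero extension embeds $W^{1,2}_0(D_1)$ isometrically, for this seminorm, into $W^{1,2}_0(D_2)$), whence $\capacity(E;D_2)\leq\capacity(E;D_1)$.

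For (iii), the inclusions $E\subset E_{k+1}\subset E_k$ together with (i) show that $\{\capacity(E_k)\}$ and $\{\capacity(E_k;D)\}$ are nonincreasing and bounded below by the capacity of $E$, so each limit equals the infimum and dominates the corresponding capacity of $E$. For the reverse inequality I would exploit that capacity test functions must exceed $1$ on a full \emph{neighborhood}: given $\varepsilon>0$, pick $v$ admissible for $E$ with Dirichlet integral below $\capacity(E)+\varepsilon$, together with an open set $U\supset E$ on which $v\geq 1$ a.e. Since the $E_k$ are nested compacts with $\bigcap_k E_k=E\subset U$, the compact sets $E_k\setminus U$ decrease to the empty set, so $E_{k_0}\subset U$ for some $k_0$; then $v$ is admissible for $E_{k_0}$ and $\capacity(E_{k_0})\leq\capacity(E)+\varepsilon$. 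Letting $\varepsilon\to 0$ closes the gap, and the relative case is identical.

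For (v), monotonicity (ii) makes $\{\capacity(E;D_k)\}$ nonincreasing with limit $\inf_k\capacity(E;D_k)\geq\capacity(E;D)$, so the only real work is the reverse bound. Here I would pass through capacitary potentials: using the quasicontinuous reformulation of Proposition \ref{prop:cap_qe}, the convex class $\tilde\cC_2(E;D_k)$ is strongly, hence weakly, closed in $W^{1,2}_0(D_k)$ (a strongly convergent sequence admits a q.e.\ convergent subsequence by Proposition \ref{prop:fine_properties}(ii), preserving the constraint), so the Direct Method produces minimizers $u_k$ with $\int_{D_k}|\nabla u_k|^2=\capacity(E;D_k)$ and $\tilde u_k\geq 1$ q.e.\ on $E$. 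Extending each $u_k$ by zero gives a bounded sequence in $W^{1,2}_0(D)$, hence a weak limit $u$ along a subsequence, and weak lower semicontinuity yields $\int_D|\nabla u|^2\leq\inf_k\capacity(E;D_k)$. To retain the constraint in the limit I would invoke Mazur's lemma to obtain convex combinations of the $u_k$ converging strongly in $W^{1,2}$, observe that each combination is still $\geq 1$ q.e.\ on $E$, and then apply Proposition \ref{prop:fine_properties}(ii) to extract a q.e.\ convergent subsequence, concluding $\tilde u\geq 1$ q.e.\ on $E$. Thus $u\in\tilde\cC_2(E;D)$ and $\capacity(E;D)\leq\int_D|\nabla u|^2\leq\inf_k\capacity(E;D_k)$.

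For (iv) I would argue in the same spirit, now fixing the domain and letting the obstacle sets grow: (i) shows $\{\capacity(E_k)\}$ and $\{\capacity(E_k;D)\}$ are nondecreasing and bounded above by the capacity of $E$, so their limits equal their suprema and are dominated by it, while the matching lower bound comes from capacitary potentials $u_k$ for $E_k$, the uniform energy bound $\int|\nabla u_k|^2\leq\capacity(E;D)$, and the same Mazur-plus-q.e.-convergence device; the point is that for each fixed $m$ the tail convex combinations are $\geq 1$ q.e.\ on $E_m$, so the limit is $\geq 1$ q.e.\ on every $E_m$ and therefore q.e.\ on $E=\bigcup_k E_k$. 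The self-capacity case is handled identically upon replacing $W^{1,2}_0(D)$ by the homogeneous space $\{v\in L^{2^*}(\R^d):\nabla v\in L^2\}$, where the Sobolev inequality still supplies weak compactness from the Dirichlet bound and Proposition \ref{prop:fine_properties}(ii) is applied locally near the compact obstacle. The hard part, common to (iv) and (v), is precisely this transfer of the pointwise constraint $v\geq 1$ across a merely weak limit; everything else reduces to monotonicity of the admissible classes and the compactness argument for nested compacts. I note that (iii)--(v) could alternatively be read off from the fact that the variational $2$-capacity is a Choquet capacity, cf.\ \cite[Chapter 2]{HeiKilMar2006}.
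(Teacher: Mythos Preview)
Your proof is correct. For (i)--(iv) the paper simply invokes \cite[Theorem~2.2]{HeiKilMar2006} and \cite{DalMaso} (and notes that (ii) is immediate from the definition), so your direct arguments---the neighborhood--compactness trick for (iii) and the capacitary--potential argument for (iv)---go beyond what the paper records but are standard and sound. The genuine divergence is in (v), where the paper argues in the opposite direction: rather than extracting minimizers $u_k$ for $\capacity(E;D_k)$ and passing to a weak limit, it starts from a single near--minimizer $u$ for $\capacity(E;D)$, truncates so that $u\equiv 1$ q.e.\ on $E$, subtracts a cutoff $\eta\in C^\infty_c(D)$ with $\eta\equiv 1$ on $\bar E$, and then approximates $u-\eta\in W^{1,2}_0(D\setminus\bar E)$ by smooth compactly supported functions; adding $\eta$ back produces competitors supported in some $D_{k_n}$, whence $\capacity(E;D_{k_n})\leq\capacity(E;D)+2\varepsilon$. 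This cutoff trick needs $|\partial E|=0$, so the paper first treats smooth $\partial E$ and then reduces the general compact case via (iii) and an outer smooth approximation of $E$. Your route avoids this two--step reduction and is conceptually aligned with the paper's later lower--semicontinuity proof (Proposition~\ref{prop:lsc_cap}), where exactly the Mazur--plus--q.e.\ device reappears; the paper's route is more constructive, never needs minimizers to exist, and keeps the analysis entirely on the side of a single test function.
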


\begin{proof}
The proof of Properties \ref{prop:cap_monot_set},
\ref{prop:inf-cap-outside} and \ref{prop:sup-cap-inside} can be found
in \cite[Theorem 2.2]{HeiKilMar2006} \UUU and \EEE \cite[Propositions 3.1, 4.1, 4.5]{DalMaso}. Note that the proof in \cite[Theorem 2.2]{HeiKilMar2006} is performed for the relative capacity but that the case of the capacity follows by the same argument.
 Property \ref{prop:cap_monot_domain} follows directly from the definition of capacity.


\UUU In order to \EEE prove \UUU Property \EEE \ref{prop:inf-cap-inside}, we first notice that
the inequality 
$\capacity (E;D)\leq \inf_{k\in \mathbb{N}}\capacity(E;D_k)$ follows
directly from \UUU Property \EEE \ref{prop:cap_monot_domain}. 
Thus, it suffices to prove the opposite inequality. For convenience of
the reader, we subdivide the proof into two steps.

{\sc Step 1}: assume that $\partial E$ is of class $C^{\infty}$. Let $\ep>0$, and consider a map $u\in W^{1,2}_0(D)$ with $u\geq 1$ quasi everywhere on $E$, and such that
\begin{equation}\capacity (E;D)\geq \int\limits_D |\nabla u|^2\,\d x-\ep.\label{dopo}
\end{equation}
\UUU By possibly \EEE replacing $u$ with $\bar{u}:=\min\{u,1\}$, we
can assume that $u\equiv 1$ quasi everywhere on $E$. Let now $\eta\in
C^{\infty}_c(D)$ be such that $\eta\equiv 1$ on $\bar{E}$. Note that
such cut-off function exists because $|\partial E|=0$. Set
$v:=u-\eta$. By definition, $v\equiv 0$ quasi everywhere on $E$. Thus,
we \UUU can \EEE find a sequence $\{v_n\}\subset C^\infty_c(D\setminus \bar{E})$ satisfying the following properties:
\begin{align}
&\label{eq:appr-vn}\|v_n-(u-\eta)\|_{H^1(D)}< \ep,\\
&\label{eq:supp-vn} {\rm supp}\, \UUU (v_n+\eta) \EEE \subset D_{k_n},
\end{align}
 for a suitable subsequence $\{D_{k_n}\}\subset \{D_k\}$. Consider now the maps $u_n:=v_n+\eta$. We have that
\begin{align}
&\label{eq:un-approx} \|u_n-u\|_{H^1(D)}=  \|v_n+\eta-u\|_{H^1(D)}<\ep,\\
&\label{eq:un-smooth} u_n\in C^\infty_c(D)\quad\text{for every }n\in \mathbb{N},\\
&\label{eq:un-1}
u_n\equiv 1\quad\text{ quasi everywhere on }E.
\end{align}
Therefore, $\{u_n\}\subset \mathcal{\tilde{A}}(E;D)$, where
$\mathcal{\tilde{A}}(E;D)$ is the class in \UUU Definition
\EEE\eqref{eq:def-capacity-qe}. Additionally, by \UUU \eqref{dopo} and
\EEE \eqref{eq:un-approx},
\begin{equation}\label{eq:step1-inf-cap}
\begin{aligned}
    \capacity (E;D)&\geq \int\limits_D |\nabla u|^2\,\d x-\ep\geq
    \int\limits_D |\nabla u_n|^2\,\d x-2\ep  \geq
    \int\limits_{D_{k_n}} |\nabla u_n|^2\,\d x-2\ep\\
    & \geq \capacity (E;D_{k_n})-2\ep.
\end{aligned}
\end{equation}
\UUU Due to the arbitrariness of $\ep>0$, this \EEE  yields \UUU
Property \EEE \ref{prop:inf-cap-inside} in the case of smooth sets~$E$.

\noindent {\sc Step 2}: Let now $E$ be an arbitrary compact subset of $D$. 
\UUU Arguing as in \EEE  Subsection~\ref{sec:smooth_approx}, we can
find a sequence of \UUU smooth \EEE compact sets $E_m$, approximating $E$ from outside.
In view of~\UUU Property \EEE \ref{prop:inf-cap-outside},
we have that
\begin{align*}
    \capacity(E;D)=\inf\Bigg\{\capacity(E_m;D)\ :\ &E_m\text{ is
  compact, }\partial E_m\text{ is }C^\infty,\\
  &E_{m+1}\subset {\rm int }\,E_{m},\,E=\bigcap_{m\in \mathbb{N}}E_m\Bigg\}.
\end{align*}
Fix $\ep>0$ and let $m(\ep)\in \mathbb{N}$ be such that
$$\capacity (E;D)\geq \capacity(E_{m(\ep)};D)-\ep.$$
By \eqref{eq:step1-inf-cap} and \UUU Property \EEE  \ref{prop:cap_monot_set}, we deduce the existence of an index $k(m,\ep)$ such that
\begin{align*}
    \capacity(E;D)\geq \capacity(E_{m(\ep)};D)-\ep 
    &\geq \capacity(E_{m(\ep)};D_{k(m,\ep)})-3\ep
    \geq \capacity(E;D_{k(m,\ep)})-3\ep \\
    &\geq \inf_{k\in \mathbb{N}} \capacity(E;D_k) - 3\ep.
\end{align*}
Given that $\ep$ is arbitrary,
this completes the proof of \UUU Property \EEE \ref{prop:inf-cap-inside}.
\end{proof}


\subsection{Upper semicontinuity of the capacity}
\UUU This section is devoted to the proof of the lower semicontinuity of
the capacitary terms in $\cF_1$ and $\cF_2$. This in particular rests
upon the upper semicontinuity of the capacity and the relative
capacity.   \EEE

\begin{Proposition}[\UUU Upper semicontinuity]\label{prop:limsup-cap}
Let 
$y\colon \UUU \bar \Omega  \EEE \to \mR^d$
and $\{y_n\}\subset C^0(\bar{\Omega};\mathbb{R}^d)$ be 
 homeomorphisms such that $y^n\to y$ strongly in
 $C^0(\bar{\Omega};\mathbb{R}^d)$.
Then,
\begin{align}
   &\UUU \limsup_{n\to +\infty}\capacity (y^n(\oomega))\leq \capacity (y(\oomega)),\label{neq:usc_capacity0}\\
    &\limsup_{n\to +\infty}\capacity (y^n(\oomega);y^n(\Omega))\leq \capacity (y(\oomega);y(\Omega)). \label{neq:usc_capacity}
\end{align}
\end{Proposition}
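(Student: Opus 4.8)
The plan is to prove the two upper-semicontinuity inequalities \eqref{neq:usc_capacity0} and \eqref{neq:usc_capacity} by constructing, from a near-optimal test function for the limiting set, a competitor for each approximating set, exploiting the uniform convergence $y^n \to y$ together with the outer-approximation machinery from Subsection~\ref{sec:smooth_approx} and the monotonicity Property~\ref{prop:inf-cap-outside}. The essential point is that uniform convergence of homeomorphisms controls the images of the compact conductor: since $y^n \to y$ uniformly on $\bar\Omega$, the sets $y^n(\oomega)$ are eventually trapped inside any fixed outer neighborhood of $y(\oomega)$. I would make this quantitative by passing to a smooth outer thickening $K^\ep := (y(\oomega))^\ep$ as in \eqref{eq:thickeing}: for each fixed $\ep>0$ there is $N=N(\ep)$ such that $y^n(\oomega)\subset \interior K^\ep$ for all $n\ge N$, because $\sup_{x\in\bar\omega}|y^n(x)-y(x)|\to 0$ forces every point of $y^n(\oomega)$ to lie within distance $\ep$ of $y(\oomega)$ for $n$ large.

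Granting this trapping, the first inequality follows quickly. By monotonicity of capacity under set inclusion (Property~\ref{prop:cap_monot_set}) we get $\capacity(y^n(\oomega)) \le \capacity(K^\ep)$ for all $n \ge N(\ep)$, whence $\limsup_{n\to\infty}\capacity(y^n(\oomega)) \le \capacity(K^\ep)$. Now I let $\ep \to 0$: since $\bigcap_{0<\ep<\ep_0}K^\ep = y(\oomega)$ with the sets nested and compact, Property~\ref{prop:inf-cap-outside} gives $\capacity(K^\ep) \to \capacity(y(\oomega))$, which yields \eqref{neq:usc_capacity0}. For the relative capacity in \eqref{neq:usc_capacity} the conductor side is handled identically, but I must simultaneously account for the domain argument $y^n(\Omega)$. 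The difficulty here is that \eqref{prop:cap_monot_domain} runs the wrong way for the conductor inclusion but the right way for the domain: enlarging the domain \emph{decreases} relative capacity, so I need the domains $y^n(\Omega)$ to contain a fixed neighborhood from \emph{inside}. I would therefore use the opposite side of uniform convergence: for an inner smooth approximation $A_\delta$ of the open set $y(\Omega)$ (as in \eqref{eq:thinning}), the uniform convergence $y^n\to y$ together with injectivity forces $A_\delta \subset y^n(\Omega)$ for $n$ large, so that by Property~\ref{prop:cap_monot_set} and Property~\ref{prop:cap_monot_domain},
\begin{equation*}
\capacity(y^n(\oomega); y^n(\Omega)) \le \capacity(K^\ep; A_\delta)
\end{equation*}
for all sufficiently large $n$, provided $K^\ep \subset A_\delta$.

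The remaining work is a double limit. Taking $\limsup_{n\to\infty}$ gives $\limsup_n \capacity(y^n(\oomega); y^n(\Omega)) \le \capacity(K^\ep; A_\delta)$, after which I first send $\ep \to 0$ (using Property~\ref{prop:inf-cap-outside} for the relative capacity, with the domain $A_\delta$ held fixed) to obtain the bound by $\capacity(y(\oomega); A_\delta)$, and then send $\delta \to 0$ (using Property~\ref{prop:inf-cap-inside}, since $A_\delta \nearrow y(\Omega)$ with $y(\oomega)$ compactly contained) to reach $\capacity(y(\oomega); y(\Omega))$. The main obstacle I anticipate is precisely the inner-containment step $A_\delta \subset y^n(\Omega)$: establishing that the images of the open domain engulf a fixed inner smooth set uniformly in $n$ requires more than uniform convergence of the maps, since $y^n(\Omega)$ is open and its boundary may degenerate. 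I expect this to hinge on the fact that each $y^n$ is a homeomorphism, so that $y^n(\Omega)$ is open and $y^n(\partial\Omega) = \partial(y^n(\Omega))$, combined with the uniform convergence of $y^n$ on the compact boundary $\partial\Omega$; a compactness/covering argument on $\overline{A_\delta}$, using that $\dist(A_\delta, \partial(y(\Omega)))>0$, should then push each compactly-contained inner point into $y^n(\Omega)$ for $n$ large. Ensuring the order of limits is consistent (so that the fixed choices of $\ep$ and $\delta$ interlock with the thresholds $N(\ep)$ and the inner-containment index) is the bookkeeping that must be done carefully.
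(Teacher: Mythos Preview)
Your proposal is correct and follows essentially the same route as the paper: outer smooth thickenings of $y(\oomega)$ combined with Property~\ref{prop:cap_monot_set} and Property~\ref{prop:inf-cap-outside} for \eqref{neq:usc_capacity0}, and additionally inner smooth approximations of $y(\Omega)$ with Property~\ref{prop:cap_monot_domain} and Property~\ref{prop:inf-cap-inside} for \eqref{neq:usc_capacity}. The paper simply asserts the inner-containment $D_\ell\subset y^n(\Omega)$ ``by the uniform convergence'' without further comment, so your explicit flagging of this point and your sketch via $y^n(\partial\Omega)=\partial(y^n(\Omega))$ and a distance/degree argument is, if anything, more careful than the paper.
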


\begin{proof}
Since the maps $\{y^n\}$ are homeomorphisms, \UUU we have \EEE that $y^n(\oomega)$ is compact and $y^n(\Omega)$ is a domain for every $n\in \mN$.
Let
$\{\UUU E_m \EEE \}$ be a sequence of $C^\infty$ compact sets,
approximating $y(\oomega)$ from outside (see  Figure \ref{pic:1} below). \UUU The existence of such
approximating sets is discussed in Subsection
\ref{sec:smooth_approx}.  By the uniform convergence of the sequence $\{y^n\}$ we deduce that
$ y(\oomega)\cup y^n(\oomega)\subset {\rm int}\,\UUU E_m\EEE$  for $n\in \mathbb{N}$ \UUU
large  enough, and for every $m\in \mN$. Hence, Property
~\ref{prop:cap_monot_set} entails that
$$\limsup_{n\to +\infty}\capacity (y^n(\oomega)) \leq
\capacity(E_m)\quad \forall m \in \mathbb{N}.$$
By taking $m \to +\infty$ an using
Property~\ref{prop:cap_monot_domain} we get \eqref{neq:usc_capacity0}.

Let now \EEE $\{D_\ell\}$ be a sequence of $C^\infty$ open sets, approximating
$y(\Omega)$ from inside 
(see \UUU again \EEE Figure \ref{pic:1}). Again, the \UUU existence of such
approximating sets is discussed in Subsection \ref{sec:smooth_approx}. \EEE
By the uniform convergence \UUU we have that \EEE
$D_\ell\subset y(\Omega)\cap y^n(\Omega)$ for $n\in \mathbb{N}$ \UUU
large \EEE enough. 

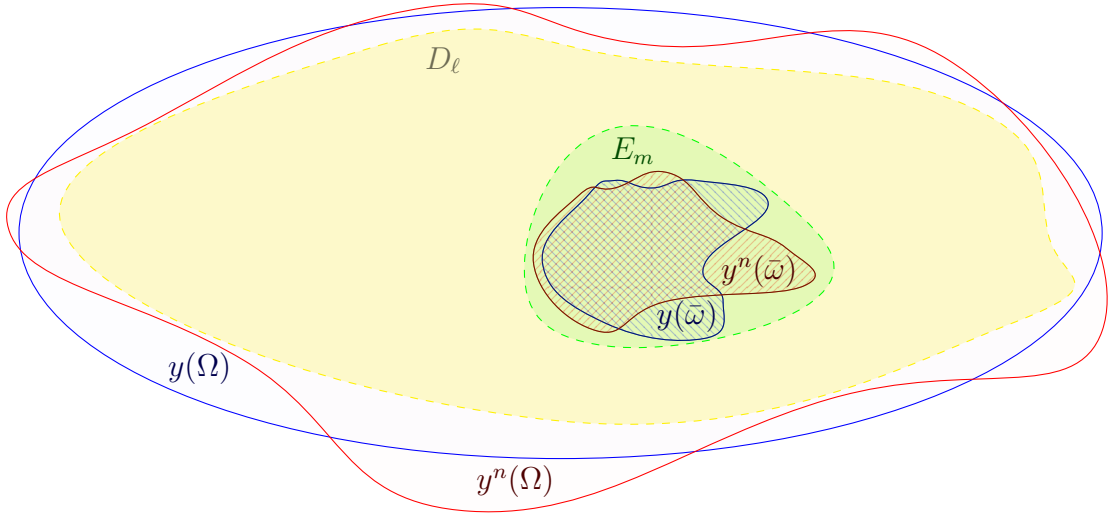
\begin{figure}[h]
\centering
\begin{tikzpicture}[xscale=0.12,yscale=0.1]
    \draw [blue] (0,0) ellipse (60 and 30);
    \fill [blue,opacity=0.01] (0,0) ellipse (60 and 30);
    \draw [red] plot [smooth cycle,tension=0.8] coordinates {(-61,0) (-45,15) (-15,30) (10,25) (43,23) (60,-13) (30,-22) (-10,-37) (-35,-17)};
    \fill [red, opacity=0.01] plot [smooth cycle,tension=0.8] coordinates {(-61,0) (-45,15) (-15,30) (10,25) (43,23) (60,-13) (30,-22) (-10,-37) (-35,-17)};
    \draw [yellow, dashed] plot [smooth cycle,tension=0.8] coordinates {(-55,0) (-20,25) (10,23) (45,17) (54,0) (50,-12) (0,-25)};
    \fill [yellow, opacity=0.2] plot [smooth cycle,tension=0.8] coordinates {(-55,0) (-20,25) (10,23) (45,17) (54,0) (50,-12) (0,-25)};
    
    \node[yellow!30!black] at (-13,23) {$D_\ell$};
    \node[blue!30!black] at (-40,-18) {$y(\Omega)$};
    \node[red!30!black] at (-5,-33) {$y^n(\Omega)$};
    
    \draw [blue] plot [smooth cycle,tension=0.8] coordinates {(-2,-4) (3,5) (6,7) (10,6) (15,7) (23,4) (16,-4) (18,-9) (16, -14) (5,-12)};
    \draw[pattern=north west lines, pattern color=blue, opacity=0.5] plot [smooth cycle,tension=0.8] coordinates {(-2,-4) (3,5) (6,7) (10,6) (15,7) (23,4) (16,-4) (18,-9) (16, -14) (5,-12)};
    \draw [red] plot [smooth cycle,tension=0.8] coordinates {(-3,-4) (2,5) (6,6) (12,8) (18,2) (25,-2) (27,-7) (12, -9) (4,-13)};
    \draw [pattern=north east lines, pattern color=red, opacity=0.5] plot [smooth cycle,tension=0.8] coordinates {(-3,-4) (2,5) (6,6) (12,8) (18,2) (25,-2) (27,-7) (12, -9) (4,-13)};
    \draw [green, dashed] plot [smooth cycle,tension=0.8] coordinates { (-4,-4) (6,14) (25,4) (28,-9) (5,-15)};
    \fill [green, opacity=0.1] plot [smooth cycle,tension=0.8] coordinates { (-4,-4) (6,14) (25,4) (28,-9) (5,-15)};
    
    \node[green!30!black] at (8,11) {$E_m$};
    \node[blue!30!black] at (14,-11) {$y(\bar\omega)$};
    \node[red!30!black] at (22,-5) {$y^n(\bar\omega)$};
\end{tikzpicture}
\caption{Sets $D_\ell$ and $\UUU E_m$.}
\label{pic:1}
\end{figure}


Then,   \UUU from Properties \EEE ~\ref{prop:cap_monot_set}
and~\ref{prop:cap_monot_domain} \UUU we deduce that \EEE
$$\capacity (y^n(\oomega);y^n(\Omega))\leq \capacity (\UUU E_m
\EEE;y^n(\Omega))\leq \capacity (\UUU E_m \EEE ;D_\ell)$$ for $n$ \UUU
large \EEE enough, and for every $m$ and $\ell$. In particular,
\begin{align*}
   & \limsup_{n\to+\infty}\capacity (y^n(\oomega);y^n(\Omega))\leq
     \inf_{\ell}\inf_{m}\capacity (\UUU E_m\EEE;D_\ell)\\
   &\quad=\inf_{\ell}\capacity (y(\oomega);D_\ell)=\capacity (y(\oomega);y(\Omega)),
    \end{align*}
where the second-to-last equality follows by \UUU Property~\ref{prop:inf-cap-outside}, 
\EEE and the last one by~ \UUU Property~\ref{prop:inf-cap-inside}.
\EEE
\end{proof}



\section{Proof of Theorem~\ref{thm:main}}
\label{sec:proof-main}
This section is devoted to the proof of our main result, \UUU
Theorem~\ref{thm:main}. This follows from an application of the  Direct
Method.  \EEE

Let \UUU $\{y^n_1\}_{n\in\mN}$, $\{y^n_2\}_{n\in\mN} \OOO \subset \cA \EEE$ be
minimizing sequences for the functionals $\cF_1$ and $\cF_2$,
respectively. 
The compactness of the \OOO sequences 
$\{y^n_1\}_{n\in\mN} $ and $ \{y^n_2\}_{n\in\mN}$ \EEE  follows directly from \eqref{hp:growth}, and from the observation that
\begin{align*}
   &\UUU \liminf_{n\to +\infty} \cF_1(y^n_1)
    \leq \cF_1(\textrm{id})= \int\limits_\Omega
     W(x,\textrm{Id})\, \d x  +\frac{Q^2}{2\capacity(\oomega)}<+\infty,
  \\
  &\UUU \liminf_{n\to +\infty} \cF_2(y^n_2) \leq \cF_2(\textrm{id})=
  \int\limits_\Omega
     W(x,\textrm{Id})\, \d x+\frac{Q^2}{2\capacity(\oomega;\Omega)}<+\infty
    .   
\end{align*}
Hence, there \MMM exist \OOO  $y_1$, $y_2 \in W^{1,q}(\Omega;\mR^d)$ \EEE such that, up to extracting a not relabeled subsequence, 
\begin{equation}
\label{weak_conv}
    \UUU y^n_i\wk y_i \text{ weakly in } W^{1,q}(\Omega;\mR^d)\quad \UUU
    \text{for} \  i=1,\,2. \EEE
\end{equation}
%
%
%
%
%
Then, 
\begin{equation}
\label{neq:lsc_gradient}
    \int\limits_{\Omega} |\nabla \UUU y_i\EEE |^q \, \d x \leq
    \liminf_{n\to +\infty}\int\limits_\Omega |\nabla \UUU
    y^n_i\EEE|^q\,\d x \quad \UUU
    \text{for} \  i=1,\,2. \EEE
\end{equation}
Moreover,  by \eqref{weak_conv}, it follows that
\begin{equation}
\label{eq:wk-det}    
\det \nabla \UUU y^n_i \EEE  \wk \det \nabla \UUU y_i \EEE
\quad\text{weakly in }L^{\UUU q/d \EEE }(\Omega) \quad \text{for} \  i=1,\,2,
\end{equation} 
and so $\det \nabla \UUU y_i \EEE  \geq 0$ almost everywhere in
$\Omega$, since $\det \nabla \UUU y^n_i \EEE \geq 0$ almost everywhere in $\Omega$.
Indeed, by \eqref{eq:wk-det} and Mazur's lemma, we find linear
combinations $\UUU d^n_i \EEE $ \UUU of $\det \nabla \UUU y^n_i \EEE $
\EEE such that $\UUU d^n_i  \EEE\to \det \nabla \UUU y_i \EEE$
strongly in $L^{\UUU q/d \EEE}(\Omega)$, and $\UUU d^n_i \EEE \geq 0$
a.e. This yields that $\det \nabla \UUU  y_i \EEE \geq 0$ almost everywhere in $\Omega$.

Let now $\UUU K_i \EEE $ be a weak limit of $K_{\UUU y^n_i \EEE,d}$ in $L^{ds}(\Omega)$. 
Then by \cite{GehIwa1999} (see also \cite[Theorem
8.10.1]{IwaMar2001} \OOO and \cite{VodMol2016}), \UUU $y_i $ \EEE has finite distortion and
\begin{equation}
\label{neq:lsc_distortion}
    \int\limits_\Omega \left(K_{\UUU y_i \EEE,d}(x)\right)^{ds}\,\d x
    \leq \liminf_{n\to +\infty}\int\limits_\Omega \left(K_{\UUU y^n_i
        \EEE ,d}(x)\right)^{ds}\,\d x.
\end{equation}

Note also that neither \UUU $\{y^n_i\}$ nor $y_i$ \EEE can be constant owing to the boundary conditions
$\UUU y_i|_{\Gamma_0}=y^n_i \EEE |_{\Gamma_0} = \textrm{id}$ (see Subsection \ref{subs:geo}).
Therefore, \UUU $y_i \in \cA$ \EEE and satisfies \UUU Properties~(i)--(vii)
\EEE of Proposition~\ref{prop:properties_deformations_homeo}.

In view of the Sobolev embedding theorem and \UUU the weak convergence
\EEE~\eqref{weak_conv}, we may assume $y^n_i \to y_i$ \EEE strongly in \OOO $C^0(\bar{\Omega}, \mR^n)$. \EEE
Combining~\eqref{hp:growth}, \eqref{neq:lsc_gradient},
\eqref{neq:lsc_distortion}, \UUU and~\eqref{neq:usc_capacity0}, \EEE we obtain
 
\begin{align*}
 \UUU \cF_1(y_1)& \UUU =
    \int\limits_\Omega W(x,\nabla y_1(x)) \,\d x
              +\frac{Q^2}{2\capacity\,(y_1(\bar{\omega}))} \\
    & \UUU \leq \liminf_{n\to +\infty}\int\limits_\Omega W(x,\nabla y^n_1(x))\,\d x
    +
      \frac{Q^2}{2\limsup\limits_{n\to+\infty}\capacity\,(y^n_1(\bar{\omega}) )}
    \\
    &\UUU  = \liminf_{n\to +\infty}\left(\int\limits_\Omega W(x,\nabla y^n_1(x))\,\d x
    + \frac{Q^2}{2\capacity\,(y^n_2(\bar{\omega}))}\right)
    = \liminf_{n\to +\infty} \cF_1(y^n_1) = \inf_{\cA}\cF_1
\end{align*}
\UUU and, analogously,
$$\cF_2(y_2)\leq  \liminf_{n\to +\infty} \cF_2(y^n_2) = \inf_{\cA}\cF_2,$$
so that the statement of Theorem~\ref{thm:main} follows. \EEE   

\MMM \section{Continuity of the capacity and geometry of deformed sets}
\label{sec:capacity-new}

In this section we complete our study of continuity properties of the capacity by investigating its lower semicontinuity
under some additional requirements on the geometry of
the deformed configuration. \UUU Note that the proof of Theorem
\ref{thm:main} does not rely on such lower semicontinuity.  \EEE

\subsection{Lower semicontinuity of the capacity}
\label{subs:lsc-cap}

\begin{Proposition}[\UUU Lower semicontinuity]\label{prop:lsc_cap}
    Let 
    $y\colon \UUU \bar \Omega\EEE \to \mR^d$
    and $\{y^n\}\subset C^0(\bar{\Omega};\mR^d)$
    be homeomorphisms such that $y^n\to y$ strongly in
    $C^0(\bar{\Omega};\mathbb{R}^d)$. \UUU Then,
 \begin{align}
    \label{neq:lsc_capacity0}\UUU\capacity(y(\bar{\omega})) &\UUU\leq
                                                              \liminf\limits_{n\to+\infty}
                                                              \capacity(y^n(\bar{\omega}))
                                                              .
    \end{align}
    
    Suppose \UUU additionally that \EEE  $y$
    is such that 
    $W^{1,2}_0(y(\Omega)) = \accentset{\,\circ}{W}^{1,2} (y(\Omega))$. 
    Then,  
    \begin{align}
     \label{neq:lsc_capacity}
        \capacity(y(\bar{\omega}); y(\Omega)) &\leq \liminf\limits_{n\to+\infty} \capacity(y^n(\bar{\omega}); y^n(\Omega)).
    \end{align}
\end{Proposition}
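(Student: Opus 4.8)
The plan is to prove both inequalities by applying the direct method to the capacitary variational problems themselves: I extract a weak limit of near-optimal capacity test functions, bound its Dirichlet energy from above by weak lower semicontinuity, and then transfer the pointwise constraint to the limit by fine-potential-theoretic arguments.

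For \eqref{neq:lsc_capacity0} I would first pass to a (not relabelled) subsequence along which $\capacity(y^n(\oomega))\to L:=\liminf_n\capacity(y^n(\oomega))$, assuming $L<+\infty$. For each $n$ I pick $v_n\in\cC_1(y^n(\oomega))$ with $\int_{\R^d}|\nabla v_n|^2\le\capacity(y^n(\oomega))+1/n$ and replace it by its truncation $\min\{\max\{v_n,0\},1\}$, which does not increase the Dirichlet integral and still equals $1$ in a neighbourhood of $y^n(\oomega)$. Via the Sobolev embedding $\dot W^{1,2}(\R^d)\hookrightarrow L^{2^*}(\R^d)$ the sequence is bounded in $L^{2^*}$ and $\{\nabla v_n\}$ is bounded in $L^2$, so up to a subsequence $v_n\wk v$ in $L^{2^*}(\R^d)$, $\nabla v_n\wk\nabla v$ in $L^2(\R^d;\R^d)$, and convexity of the Dirichlet integral gives $\int|\nabla v|^2\le L$. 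It then remains to show that the quasicontinuous representative satisfies $\tilde v\ge1$ q.e.\ on $y(\oomega)$, since by Proposition~\ref{prop:cap_qe} this yields $\capacity(y(\oomega))\le\int|\nabla v|^2\le L$.

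The transfer of the constraint is the crux. I would use Mazur's lemma to produce convex combinations $w_j=\sum_{k\ge j}\lambda_k^j v_k$ converging strongly to $v$, and then Proposition~\ref{prop:fine_properties}(ii) to extract a subsequence with $\tilde w_j\to\tilde v$ q.e. The key geometric input is that uniform convergence of the homeomorphisms forces every compact $C\subset y(\omega)$ to satisfy $C\subset y^n(\omega)\subset y^n(\oomega)$ for all large $n$: since $C$ lies at positive distance from $y(\partial\omega)$, uniform convergence gives $C\cap y^n(\partial\omega)=\emptyset$ and hence $\deg(y^n,\omega,\xi)=\deg(y,\omega,\xi)=\pm1\ne0$ for $\xi\in C$, so that $\xi\in y^n(\omega)$. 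Consequently, for $j$ large every $v_k$ entering $w_j$ satisfies $\tilde v_k\ge1$ q.e.\ on $C$, and since a convex combination of functions $\ge1$ is again $\ge1$, we obtain $\tilde w_j\ge1$ q.e.\ on $C$, and in the limit $\tilde v\ge1$ q.e.\ on $C$. As $C\subset\subset y(\omega)$ is arbitrary, $\tilde v\ge1$ q.e.\ on the open set $y(\omega)=\interior y(\oomega)$. The genuinely delicate point, which I expect to be the main obstacle, is to upgrade this across the boundary $y(\partial\omega)$ to $\tilde v\ge1$ q.e.\ on the full compact set $y(\oomega)=\overline{y(\omega)}$, where the degree argument is unavailable and the bare inner estimate only controls $\capacity(y(\omega))\le L$, which is the wrong side of the monotonicity $\capacity(y(\oomega))\ge\capacity(y(\omega))$. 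I would resolve this by quasicontinuity: the superlevel set $\{\tilde v\ge1\}$ is quasi-closed and contains $y(\omega)$ up to a set of zero capacity, and a quasi-closed set containing an open set contains its Euclidean closure up to capacity zero, which gives $\tilde v\ge1$ q.e.\ on $y(\oomega)$.

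For \eqref{neq:lsc_capacity} I would run the same scheme with the moving domains. Truncated near-optimal competitors $v_n\in\cC_2(y^n(\oomega);y^n(\Omega))\subset W^{1,2}_0(y^n(\Omega))$, extended by zero, lie in $W^{1,2}(\R^d)$ by \eqref{sobolev_inclusion} with $v_n=0$ a.e.\ outside $y^n(\Omega)$; the weak limit $v$ again obeys $\int|\nabla v|^2\le\liminf_n\capacity(y^n(\oomega);y^n(\Omega))$ and, by the previous paragraph, $\tilde v\ge1$ q.e.\ on $y(\oomega)$. The new ingredient is vanishing outside the limiting domain: uniform convergence places $y^n(\Omega)$ in any fixed neighbourhood of $y(\Omega)$ for large $n$, so $v=0$ a.e.\ on $\R^d\setminus\overline{y(\Omega)}$, and since $y$ satisfies the Lusin $\cN$-condition (Proposition~\ref{prop:properties_deformations_homeo}) the boundary $y(\partial\Omega)$ is Lebesgue-null, whence $v=0$ a.e.\ on $\R^d\setminus y(\Omega)$, i.e.\ $v\in\accentset{\,\circ}{W}^{1,2}(y(\Omega))$. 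It is precisely here that the standing hypothesis $W^{1,2}_0(y(\Omega))=\accentset{\,\circ}{W}^{1,2}(y(\Omega))$ is used: it promotes $v$ to $W^{1,2}_0(y(\Omega))$, making it an admissible competitor in \eqref{eq:def-capacity} for $\capacity(y(\oomega);y(\Omega))$, so that the energy bound delivers the claimed lower-semicontinuity inequality.
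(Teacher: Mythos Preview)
Your overall route coincides with the paper's: select near-optimal capacity test functions for each $n$, extract a weak limit $v$ by compactness, pass to strong convergence via Mazur, then to q.e.\ convergence via Proposition~\ref{prop:fine_properties}(ii), and check that $v$ is admissible for the limiting capacity. For \eqref{neq:lsc_capacity} you extend by zero and invoke the standing hypothesis $W^{1,2}_0(y(\Omega))=\accentset{\,\circ}{W}^{1,2}(y(\Omega))$ exactly as the paper does. You are in fact more explicit than the paper about the passage from $\tilde v\ge1$ q.e.\ on the open set $y(\omega)$ to $\tilde v\ge1$ q.e.\ on its closure $y(\oomega)$; at the corresponding point the paper asserts that $\chi_{Y^n(\oomega)}\to\chi_{y(\oomega)}$ ``owing to the uniform convergence of $\{y^n\}$'', a pointwise convergence that likewise breaks down on $y(\partial\omega)$ (take $y=\mathrm{id}$, $y^n=(1-\tfrac1n)\,\mathrm{id}$, $\omega$ a ball).

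The gap is that your proposed resolution is false as stated: it is \emph{not} true that a quasi-closed set containing an open set $U$ must contain $\bar U$ up to capacity zero. In $\R^d$, $d\ge3$, let $U=\bigcup_i B(q_i,r_i)$ with $\{q_i\}$ dense in $\bar B(0,1)$ and $\sum_i r_i^{\,d-2}$ small; then $\bar U\supset\bar B(0,1)$ while, by countable subadditivity, $\capacity(U)\le C\sum_i r_i^{\,d-2}$ can be made strictly smaller than $\capacity(\bar B(0,1))$. The capacitary potential $u$ of $U$ is quasicontinuous with $\tilde u\ge1$ q.e.\ on $U$, so $\{\tilde u\ge1\}$ is quasi-closed and contains $U$; yet $\tilde u\ge1$ q.e.\ on $\bar B(0,1)$ would force $\capacity(\bar B(0,1))\le\int|\nabla u|^2=\capacity(U)$, a contradiction. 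Thus your argument delivers only $\capacity(y(\omega))\le L$, and the upgrade to $\capacity(y(\oomega))\le L$ is not secured by quasicontinuity alone. A smaller point: your appeal to the Lusin $\cN$-property through Proposition~\ref{prop:properties_deformations_homeo} imports Sobolev and distortion hypotheses on $y$ that are not among the assumptions of the present proposition.
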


\begin{proof}
  \UUU Let us start by checking \eqref{neq:lsc_capacity}. \EEE
Without loss of generality, we may assume that 
that right-hand-side of \eqref{neq:lsc_capacity} is finite.

By Proposition~\ref{prop:cap_qe}, for every $n\in\N$, we find 
$v^n\in W^{1,2}_0(y^n(\Omega))$ with
quasicontinuous representative
$\tilde{v}^n$, such that $\tilde{v}^n \geq 1$ q.e.~on $y^n(\oomega)$, and
$$
    \int\limits_{y^n(\Omega
    )} |\nabla v^n (\xi)|^2 \,\d\xi \leq \capacity(y^n(\oomega); y^n(\Omega)) + \frac{1}{n}.
$$
Let $v_{ext}^n$ be an extension of $v^n$ by $0$ outside $y^n(\Omega)$, 
i.e.,
$v_{ext}^n \in W^{1,2}_0(\mathbb{R}^d)$
and 
$\tilde{v}_{ext}^n = 0$ a.e.\ on $\mathbb{R}^d \setminus y^n(\Omega)$.
Then, for all $n\geq n_0$ it holds that
\begin{equation*}
    \int\limits_{\mathbb{R}^d} |\nabla v_{ext}^n (\xi)|^2 \,\d\xi
    = \int\limits_{y^n(\Omega)} |\nabla v^n (\xi)|^2 \,\d\xi.
\end{equation*}

Therefore 
$\{v^n_{ext}\}_{n\in\mN}\subset W^{1,2}_0(\mathbb{R}^d)$ is
bounded. \UUU Hence,  \EEE
there exists 
$v\in W^{1,2}_0(\mathbb{R}^d)$ 
such that, up to subsequence, 
$v^n_{ext} \wk v$ weakly in $W^{1,2}_0(\mathbb{R}^d)$.

We proceed by showing that $v\in\mathcal{\tilde{\UUU C}}_2(y(\bar{\omega}),y(\Omega))$.
By Mazur's lemma (see, e.g., \cite[p.~6]{EkeTem1976}) we \UUU find \EEE a sequence
$\{u^n\} \subset W^{1,2}_0(\mathbb{R}^d)$ such that 
$u^n \to v$ strongly in $W^{1,2}_0(\mathbb{R}^d)$ \UUU with the
property that $u^n$ is a convex combination of $\{v^n_{ext},
v^{n+1}_{ext}, \dots \}$. \EEE
In particular, denoting by $Y^n(\oomega)$ the set
$Y^n(\oomega):=\bigcap\limits_{k=n}\limits^{\infty} y^k(\oomega)$, we have that
the quasicontinuous representatives $\{\tilde{u}^n\}$ associated to $\{u^n\}$ satisfy
$\tilde{u}^n \geq 1$ q.e.~on
$\bigcap\limits_{k=n}\limits^{N_n} y^k(\oomega) \supset Y^n(\oomega)$
and
$\tilde{u}^n = 0$ q.e.~on $\mR^d \setminus \bigcup\limits_{k=n}\limits^{N_n} y^k(\Omega)$, for a suitable integer $N_n\geq n$.

In view of \UUU Property (ii) of \EEE
Proposition~\ref{prop:fine_properties}  we infer that, up to subsequences, 
\begin{equation}
    u^n  \to v \quad \text{q.e.~on } \mathbb{R}^d. \label{conv:un}
    \end{equation}
Additionally,
\begin{equation*}
	|\chi_{Y^n(\bar{\omega})} u^n  - \chi_{y(\bar{\omega})} v |
	\leq 
	|\chi_{Y^n(\bar{\omega})} (u^n - v)| +
    |(\chi_{Y^n(\bar{\omega})} - \chi_{y(\bar{\omega})}) v|.
\end{equation*}
The first term converges to \UUU $0$ as $n \to \infty$ \EEE due to the fact that
$\|\chi_{Y^n(\bar{\omega})}\|_{L^{\infty}(\mathbb R^d)} \leq 1$,
and by \eqref{conv:un}. The second term is infinitesimal owing to the uniform convergence of $\{y^n\}$. Thus,
\begin{equation}
    \chi_{Y^n(\bar{\omega})} u^n  \to \chi_{y(\bar{\omega})} v \quad \text{q.e.~on } \mathbb{R}^d, \label{conv:un_omega}
\end{equation}
and hence, $v=1$ q.e. on $y(\bar{\omega})$.

\UUU We now \EEE show that $v=0$ a.e.~in $\mR^d \setminus y(\Omega)$.
For \UUU any \EEE bounded measurable set $F\subset \mR^d \setminus y(\Omega)$, we have
\begin{equation*}
    \left| \int\limits_{F} v(\xi) \,\d\xi \right| = \lim\limits_{n\to\infty}\left| \int\limits_{F} v^n_{ext}(\xi) \,\d\xi \right|
    \leq \lim\limits_{n\to\infty} \int\limits_{y^n(\Omega)\setminus y(\Omega)} |v^n_{ext}(\xi)| \,\d\xi =0.
\end{equation*}
The last equality follows from the equiintegrability of $\{v^n_{ext}\}_{n\in\mN}$, 
as well as from the fact that 
$|y^n(\Omega)\setminus y(\Omega)| \to 0$ 
(due to the uniform convergence of $\{y^n\}$).
Therefore, we conclude that $v\in \accentset{\,\circ}{W}^{1,2} (y(\Omega))$.
Since
$\accentset{\,\circ}{W}^{1,2} (y(\Omega)) = W^{1,2}_0(y(\Omega))$, this yields that
$v\in\mathcal{\tilde{A}}(y(\bar{\omega}),y(\Omega))$.

The lower semicontinuity of the capacity follows \UUU then \EEE
from the chain of inequalities 
\begin{align*}
    \capacity(y(\bar{\omega}); y(\Omega)) &\leq
    \int\limits_{y(\Omega)} |\nabla v (\xi)|^2 \,\d\xi  \leq \int\limits_{\mathbb{R}^d} |\nabla v (\xi)|^2 \,\d\xi 
    \leq \liminf_{n\to\infty} \int\limits_{\mathbb{R}^d} |\nabla v^n_{ext} (\xi)|^2 \,\d\xi 
    \\
    & = \liminf_{n\to\infty} \int\limits_{y^n(\Omega)} |\nabla v^n (\xi)|^2 \,\d\xi 
    \leq \liminf_{n\to\infty} \capacity(y^n(\bar\omega);
      y^n(\Omega)). 
\end{align*}

\UUU 
The proof of 
\eqref{neq:lsc_capacity0} follows the same lines as above, with the
simplification of not requiring extensions. In particular, we can find
a sequence $\{v^n\}_{n\in \mathbb{N}} \subset \tilde
\cC_1(y^n(\oomega))$ with
$$ \int\limits_{\R^d} |v^n(\xi)|\, \d \xi \leq \capacity
(y^n(\oomega)) +\frac{1}{n}$$
such that, at least for a not relabeled subsequence, $\nabla v^n \wk
\nabla v$ weakly in $L^2(\R^d,\R^d)$ with $v \in  \tilde
\cC_1(y(\oomega))$. We hence have that
$$ \capacity(y(\bar{\omega}))\leq
    \int\limits_{\R^d} |\nabla v (\xi)|^2 \,\d\xi  \leq
    \liminf_{n\to\infty} \int\limits_{\mathbb{R}^d} |\nabla v^n  (\xi)|^2 \,\d\xi 
     = \liminf_{n\to\infty}  \capacity(y^n(\bar\omega)).\qedhere
$$ \EEE
\end{proof}

\subsection{\UUU On the regularity of the  deformed $y(\Omega)$}\label{subs:geo}



\UUU In case of the relative capacity $\capacity(y(\oomega),
y(\Omega))$, the lower semicontinuity result from Proposition
\ref{prop:lsc_cap} is conditional to the fact that
$W^{1,2}_0(y(\Omega)) = \accentset{\,\circ}{W}^{1,2} (y(\Omega))$, for
this is needed in order to have that \EEE limiting maps $v$ satisfy 
$v \in H^1_0(y(\Omega))$. 


As \UUU already mentioned  \EEE in Subsection \ref{sec:Sobolev}, the
two spaces \UUU above \EEE can be identified \UUU in case the boundary
of the deformed set $y(\Omega)$ is $C^0$.  
On the other end, \EEE even if $y$ is a homeomorphism arising as
uniform limit of homeomorphisms $\{y^n\}$ for which $y^n(\Omega)$ is a
$C^0$ domain for every $n$, it is a priori not guaranteed that the
boundary of $y(\Omega)$ will \UUU show such regularity. \EEE

An explicit counterexample is provided by the Koch snowflake $\mathfrak{X}$. This set does not have a $C^0$-boundary and can be realized as image of a ball $B(0,1)$ under a quasiconformal map $y\colon \mR^d \to \mR^d$. In dimension $d=2$, this follows from Ahlfors' three point condition \cite{Ahl1963}, see also \cite[Theorem 2.7]{JerKen1982}; 
the case $d=3$ is studied in \cite{May2010}. \UUU On the other hand,
\EEE  $y$ is the uniform limit of mappings $\{y^n\}$, such that $y^n(\Omega)$ is a $C^0$-domain $\mathfrak{X}_n$ (polyhedron) for every $n\in \mN$, cf. Figure \ref{pic:2} below.

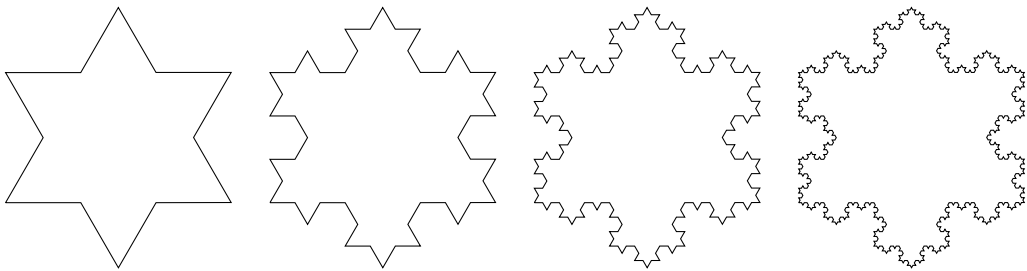
\begin{figure}[h]
\center
\begin{tikzpicture}[decoration=Koch snowflake]
   	\draw [xshift=100] decorate{ 
        (0,0) -- ++(60:3)  -- ++(300:3) -- ++(180:3)};
    \draw [xshift=200] decorate{ decorate{ 
        (0,0) -- ++(60:3)  -- ++(300:3) -- ++(180:3)}};
    \draw [xshift=300] decorate{ decorate{ decorate{ 
        (0,0) -- ++(60:3)  -- ++(300:3) -- ++(180:3)}}};
       \draw [xshift=400] decorate{ decorate{ decorate{ decorate{
        (0,0) -- ++(60:3)  -- ++(300:3) -- ++(180:3)}}}};
\end{tikzpicture}
\caption{Approximation of the Koch snowflake $\mathfrak{X}_j$, $j=1,2,3,4$}
\label{pic:2}
\end{figure}

\UUU Note however that \EEE the lack of regularity for the boundary of
the deformed set $y(\Omega)$ may be overcome \UUU by imposing \EEE
additional constraints on the approximating deformations $\{y^n\}$,
for \UUU instance,   by requiring \EEE that the sets $y^n(\Omega)$ are Lipschitz with the same constant $L$ for all $n\in \mN$.
A slightly weaker assumption would be to impose that all sets $y^n(\Omega)$ are uniformly regular:
a bounded set $D\subset \mR^d$ is called \textit{regular} if there are positive constants $b$ and $r_0$ such that for all $z\in\partial D$ and all $0<r\leq r_0$ there holds
$$
    |B(z,r) \cap (\mR^d \setminus D)| \geq b |B(z,r)|.
$$
In other words, a set is regular if \UUU the density of its \EEE
complement is  \UUU large \EEE enough. This holds, for example, for sets satisfying an outer cone condition. 
Recall the following characterization of Sobolev functions with zero traces,
taken from \cite[Theorems 4.1--4.2]{EdmNek2017}.
The first part of the statement below may be found in \cite[Theorem V.3.4 and Remark V.3.5]{EdmEva1987}. 
For the second part, under a cone property of $D$ a proof can be found in \cite{KadKuf1999} and \cite[Theorem X.6.7 and Remark X.6.8]{EdmEva1987},
we also refer to \cite{KinMar1997} for the case of weaker integrability assumptions.

\begin{Lemma}\label{lem:zero_trace_characterisation}
    Let $D$ be a bounded domain, $1 \leq p < \infty$, and for every $\xi\in \mR^d$, let
    $d(\xi):=\dist(\xi, \mR^d \setminus D)$. 
    If $u/d\in L^p(D)$ and $u\in W^{1,p}(D)$, then 
    $u\in W^{1,p}_0(D)$.
    
    If, instead, $D$ is a bounded regular domain and $1 <p<\infty$, then 
    $u\in W^{1,p}_0(D)$ if and only if $u/d \in L^1(D)$ and $u\in W^{1,p}(D)$.
\end{Lemma}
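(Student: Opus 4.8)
The statement is classical, and I would follow the route of \cite{EdmNek2017} and \cite{EdmEva1987}, recording only the main steps. The first implication requires no regularity of $D$ and is proved by a \emph{distance-based cutoff} argument. Fix $\phi\in C^\infty(\mR)$ with $\phi\equiv 0$ on $(-\infty,1]$, $\phi\equiv 1$ on $[2,+\infty)$, and $0\le\phi\le 1$, and set $\eta_\ep(\xi):=\phi(d(\xi)/\ep)$. Since $d$ is $1$-Lipschitz, each $\eta_\ep$ is Lipschitz, vanishes on $\{d<\ep\}$, so that $\eta_\ep u$ has compact support in the set $\{d\ge\ep\}\subset\subset D$, and satisfies $|\nabla\eta_\ep|\le\|\phi'\|_\infty/\ep$ with support in the shrinking shell $S_\ep:=\{\ep\le d\le 2\ep\}$. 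The plan is to show that $\eta_\ep u\to u$ in $W^{1,p}(D)$ and then to mollify each $\eta_\ep u$, thereby exhibiting $u$ as a limit of functions in $C^\infty_c(D)$ and concluding $u\in W^{1,p}_0(D)$.

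The only delicate term is $u\,\nabla\eta_\ep$ in the product rule $\nabla(\eta_\ep u)=\eta_\ep\nabla u+u\,\nabla\eta_\ep$. On $S_\ep$ one has $d/\ep\le 2$, whence the pointwise bound
\[
|u\,\nabla\eta_\ep|\le\frac{\|\phi'\|_\infty}{\ep}\,|u|=\|\phi'\|_\infty\,\frac{|u|}{d}\,\frac{d}{\ep}\le 2\|\phi'\|_\infty\,\frac{|u|}{d}\qquad\text{on }S_\ep.
\]
Raising to the power $p$ and integrating gives $\int_D|u\,\nabla\eta_\ep|^p\le C\int_{S_\ep}(|u|/d)^p$, and the right-hand side tends to $0$ as $\ep\to0$ because $u/d\in L^p(D)$ while $S_\ep\subset\{0<d\le2\ep\}$ shrinks to a null set, so the integral of the fixed $L^1$-function $(|u|/d)^p$ over $S_\ep$ vanishes by absolute continuity. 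Meanwhile $\eta_\ep\to1$ pointwise a.e.\ in $D$, so $\eta_\ep\nabla u\to\nabla u$ and $\eta_\ep u\to u$ in $L^p(D)$ by dominated convergence. This settles the first statement; it is precisely \cite[Theorem V.3.4 and Remark V.3.5]{EdmEva1987}.

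For the second statement, let $D$ be a bounded regular domain and $1<p<\infty$. The necessity direction follows from \emph{Hardy's inequality} on regular domains: the measure-density condition on $\mR^d\setminus D$ produces a constant $C>0$ with $\|u/d\|_{L^p(D)}\le C\|\nabla u\|_{L^p(D)}$ for every $u\in W^{1,p}_0(D)$, and since $D$ is bounded this yields $u/d\in L^p(D)\subset L^1(D)$, while $u\in W^{1,p}(D)$ is automatic. Conversely, the first statement already delivers sufficiency under the stronger hypothesis $u/d\in L^p(D)$; obtaining it from the sharp hypothesis $u/d\in L^1(D)$ is where I expect the \textbf{main obstacle} to lie, since the elementary cutoff above controls $u\,\nabla\eta_\ep$ only through $(|u|/d)^p$ and thus genuinely needs $L^p$-information near $\partial D$. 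The point is that the regularity (measure density) of the complement is exactly what upgrades the available boundary integrability so that the endpoint condition $u/d\in L^1(D)$, combined with $u\in W^{1,p}(D)$, suffices to force zero trace. I would borrow this endpoint refinement — equivalently, the validity of the Hardy characterization on merely regular, not necessarily Lipschitz, domains — from \cite[Theorems 4.1--4.2]{EdmNek2017}, \cite[Theorem X.6.7 and Remark X.6.8]{EdmEva1987}, and \cite{KadKuf1999, KinMar1997}, rather than reprove it here.
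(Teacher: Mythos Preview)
The paper does not actually prove this lemma: it records the statement and immediately attributes it to \cite[Theorems 4.1--4.2]{EdmNek2017}, \cite[Theorem V.3.4, Remark V.3.5, Theorem X.6.7, Remark X.6.8]{EdmEva1987}, \cite{KadKuf1999}, and \cite{KinMar1997}, with no argument given. Your proposal therefore goes beyond the paper by sketching the distance-cutoff mechanism behind the first implication and correctly flagging the Hardy-type inequality (which the paper isolates separately as Lemma~\ref{lem:ud}) as the source of necessity in the second statement; the sketch is sound and lands on exactly the same references, so there is nothing to compare against and nothing to correct.
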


The next lemma follows from \cite[Theorem 3.3]{HarHasKos2005} and \cite[Proposition 1]{Haj1999}.
\begin{Lemma}\label{lem:ud}
    Let $D$ be a bounded regular domain in $\mR^d$ and $1<p<\infty$.
    Then there exists a constant $C$, depending only on $p$, $d$, $b$ and $r_0$, such that the inequality 
    \begin{equation}\label{neq:u/d}
        \left\|\frac{u}{d}\right\|_{L^p(D)} \leq C \|\nabla u\|_{L^p(D)}
    \end{equation}
    holds for all 
    $u\in W^{1,p}_0(D)$.
\end{Lemma}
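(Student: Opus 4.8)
The plan is to derive \eqref{neq:u/d} from a \emph{pointwise Hardy inequality} and then integrate, exploiting that the Hardy--Littlewood maximal operator is bounded on $L^p$ for $p>1$. First I would reduce to the case $u\in C_0^\infty(D)$: since $W^{1,p}_0(D)$ is the closure of $C_0^\infty(D)$ in $W^{1,p}(D)$, if \eqref{neq:u/d} holds for every $u\in C_0^\infty(D)$ then, given $u\in W^{1,p}_0(D)$ and smooth $u_k\to u$ in $W^{1,p}(D)$ (hence, along a subsequence, a.e.), the right-hand sides converge while Fatou's lemma controls the left-hand side, yielding \eqref{neq:u/d} for $u$. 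For $u\in C_0^\infty(D)$ I would extend $u$ by zero to all of $\mR^d$, so that $u$ vanishes identically on the complement $E:=\mR^d\setminus D$. The geometry of $D$ enters \emph{only} through $E$, via the regularity (measure density) assumption $|B(z,r)\cap E|\ge b\,|B(z,r)|$ for $z\in\partial D$ and $0<r\le r_0$.

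The key ingredient is the pointwise Hardy inequality: there is a constant $C=C(p,d,b,r_0)$ such that, for almost every $x\in D$,
\[
    \frac{|u(x)|}{d(x)}\le C\,M(|\nabla u|)(x),
\]
where $M$ denotes the Hardy--Littlewood maximal operator. This is exactly the estimate furnished by \cite[Theorem~3.3]{HarHasKos2005} together with \cite[Proposition~1]{Haj1999}: the measure density condition guarantees that, at each boundary point $z$ and each scale $r\le r_0$, the function $u$ vanishes on a fixed proportion $\ge b$ of the ball $B(z,r)$, so that a Poincar\'e inequality on $B(z,r)$ can be run \emph{without} subtracting the mean; chaining such balls from $x$ down to the boundary then produces the displayed bound with a constant controlled solely by $p,d,b,r_0$.

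Granting this, the conclusion follows by raising to the power $p$, integrating over $D$, and invoking the maximal theorem (available since $p>1$):
\[
    \int\limits_D \frac{|u(x)|^p}{d(x)^p}\,\d x
    \le C^p\int\limits_{\mR^d}\big(M(|\nabla u|)(x)\big)^p\,\d x
    \le C'\int\limits_{\mR^d}|\nabla u(x)|^p\,\d x
    = C'\,\|\nabla u\|_{L^p(D)}^p,
\]
where $C'$ depends only on $p,d,b,r_0$, which is precisely \eqref{neq:u/d}. Note that, under the present measure density hypothesis, no self-improvement of the integrability exponent is needed: it is the strength of this hypothesis (as opposed to a mere capacity density condition) that lets the maximal operator act directly on $|\nabla u|\in L^p$.

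The hard part will be the pointwise Hardy inequality, and specifically the fact that its constant can be taken to depend only on $p,d,b,r_0$ and not on the size of $D$. A naive single-scale argument bounds the ball average $u_{B(x,2d(x))}$ using the zero set of $u$, but for points $x$ far from $\partial D$ the available zero set (living only at scales $\le r_0$) occupies a vanishing fraction of $B(x,2d(x))$, so this estimate degenerates. Overcoming this requires the chaining/capacity machinery of \cite{Haj1999,HarHasKos2005}, which connects interior points to the boundary through a controlled telescoping of balls; verifying that the density parameters $b$ and $r_0$ quantify this uniformly is the technical crux, and it is exactly what the cited results package.
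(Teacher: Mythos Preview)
Your proposal is correct and aligns with the paper's own treatment: the paper does not give a proof but simply records that the lemma follows from \cite[Theorem~3.3]{HarHasKos2005} and \cite[Proposition~1]{Haj1999}, which are precisely the references you invoke. Your sketch of the mechanism --- the pointwise Hardy inequality from those references combined with the $L^p$-boundedness of the Hardy--Littlewood maximal operator for $p>1$ --- is the standard route these citations encode, so there is nothing further to compare.
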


In the next lemma, we show that the condition of regularity of deformed domains is closed under weak Sobolev convergence.

\begin{Lemma}\label{lem:dom_regularity}
    Let 
    $y$, $y^n\in W^{1,q}(\Omega;\mR^d)$, $q>d$,
    be homeomorphisms such that $y^n\wk y$ weakly in 
    $W^{1,q}(\Omega;\mR^d)$,
    and $y^n(\Omega)$ is a regular domain with constants $b$ and $r_0$ for every $n\in \mN$.
    Then $y(\Omega)$ is also a regular domain.
\end{Lemma}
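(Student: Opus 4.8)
The plan is to reduce the claim to two facts, the uniform convergence of the deformations and the convergence of the deformed domains in the sense that $|y^n(\Omega)\triangle y(\Omega)|\to 0$. First, since $q>d$, the compact Sobolev embedding $W^{1,q}(\Omega;\R^d)\hookrightarrow C^0(\bar\Omega;\R^d)$ upgrades the weak convergence $y^n\wk y$ to the strong convergence $y^n\to y$ in $C^0(\bar\Omega;\R^d)$; set $\ep_n:=\|y^n-y\|_{C^0(\bar\Omega)}\to 0$. As $y$ and each $y^n$ are homeomorphisms of the compact set $\bar\Omega$, the invariance of domain yields that their restrictions to $\Omega$ are homeomorphisms onto the open sets $y(\Omega)$ and $y^n(\Omega)$, and that $\partial y(\Omega)=y(\partial\Omega)$, $\partial y^n(\Omega)=y^n(\partial\Omega)$. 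Moreover, $\Omega$ being Lipschitz gives $|\partial\Omega|=0$, and since $y\in W^{1,q}$ with $q>d$ satisfies the Lusin $\mathcal N$ condition (cf.~\cite{MarMiz1973}) we obtain $|\partial y(\Omega)|=|y(\partial\Omega)|=0$. The sets $y^n(\bar\Omega)$ and $y(\bar\Omega)$ are all contained in a fixed ball by uniform boundedness of $\{y^n\}$.

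The heart of the argument, and the step I expect to be the main obstacle, is to prove $|y^n(\Omega)\triangle y(\Omega)|\to 0$, equivalently $\chi_{y^n(\Omega)}\to\chi_{y(\Omega)}$ a.e. I would argue through the Brouwer degree. Fix $w\notin\partial y(\Omega)$. If $w\in y(\Omega)$, then $\rho:=\dist(w,y(\partial\Omega))>0$, and as soon as $\ep_n<\rho$ the linear homotopy $t\mapsto (1-t)y+t\,y^n$ stays away from $w$ on $\partial\Omega$, so that $\deg(y^n,\Omega,w)=\deg(y,\Omega,w)$. Because $y$ is a homeomorphism, $\deg(y,\Omega,w)=\pm1\neq 0$, and the solvability property of the degree forces $w\in y^n(\Omega)$. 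If instead $w\in\R^d\setminus\overline{y(\Omega)}$, the same homotopy gives $\deg(y^n,\Omega,w)=\deg(y,\Omega,w)=0$, whence $w\notin y^n(\Omega)$. Thus $\chi_{y^n(\Omega)}(w)\to\chi_{y(\Omega)}(w)$ for every $w\notin\partial y(\Omega)$, and since $|\partial y(\Omega)|=0$ and all the sets sit inside a fixed ball, dominated convergence delivers $|y^n(\Omega)\triangle y(\Omega)|\to 0$. The delicate point is precisely the interior direction $|y(\Omega)\setminus y^n(\Omega)|\to 0$ — which the nonvanishing of the degree is exactly designed to supply — since the exterior direction alone would follow from the cheaper Hausdorff-convergence estimate $y^n(\bar\Omega)\subset\{w:\dist(w,y(\bar\Omega))<\ep_n\}$.

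To conclude, fix $z\in\partial y(\Omega)=y(\partial\Omega)$, write $z=y(x_0)$ with $x_0\in\partial\Omega$, and set $z^n:=y^n(x_0)\in\partial y^n(\Omega)$, so that $z^n\to z$. For every $0<r\le r_0$ the regularity of $y^n(\Omega)$ gives $|B(z^n,r)\cap(\R^d\setminus y^n(\Omega))|\ge b\,|B(z^n,r)|=b\,|B(z,r)|$. I would then pass to the limit in $n$ by splitting the difference between $|B(z^n,r)\cap(\R^d\setminus y^n(\Omega))|$ and the target $|B(z,r)\cap(\R^d\setminus y(\Omega))|$ into a term bounded by $|B(z^n,r)\triangle B(z,r)|\to 0$ (a mere translation of balls) and a term bounded by $|(\R^d\setminus y^n(\Omega))\triangle(\R^d\setminus y(\Omega))|=|y^n(\Omega)\triangle y(\Omega)|\to 0$ from the previous step. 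This yields $|B(z^n,r)\cap(\R^d\setminus y^n(\Omega))|\to|B(z,r)\cap(\R^d\setminus y(\Omega))|$, and therefore $|B(z,r)\cap(\R^d\setminus y(\Omega))|\ge b\,|B(z,r)|$. Since $z\in\partial y(\Omega)$ and $0<r\le r_0$ were arbitrary, $y(\Omega)$ is regular with the \emph{same} constants $b$ and $r_0$, which proves the lemma.
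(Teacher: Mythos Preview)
Your proof is correct and follows essentially the same route as the paper's: uniform convergence from the Sobolev embedding, the choice $z^n=y^n(x_0)$ for $z=y(x_0)\in\partial y(\Omega)$, a set-theoretic splitting into a ball-translation term and a domain-convergence term, and the Lusin-$\cN$ argument for $|\partial y(\Omega)|=0$. Your explicit Brouwer-degree argument for $|y^n(\Omega)\triangle y(\Omega)|\to 0$ is in fact more detailed than the paper, which simply records $|y(\Omega)\setminus y^n(\Omega)|\le |\partial y(\Omega)|+\ep$ for $n$ large without spelling out the reason.
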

\begin{proof}
    From classical set theory there holds 
    $A \cap B \supset (C\cap D) \setminus ((C\setminus A) \cup (D \setminus B))$, and
    hence, 
    $|A \cap B| \geq |C\cap D| - |C\setminus A| - |D \setminus B|$ for
    every collection of sets $A,\,B,\,C$, and $D$.
    By Sobolev embedding theorems we can assume that 
    $y^n \to y$ strongly in \OOO $C^0(\bar{\Omega}, \mR^d)$.\EEE
    Let $z \in \partial y(\Omega)$, and let $\{z_n\}$ be a sequence of points such that $z^n \in \partial y^n(\Omega)$ for every $n\in \mN$, and $z^n \to z$. Then, from the uniform regularity of the sets $y^n(\Omega)$ we find
    \begin{equation*}
    \begin{aligned}
        |B(z,r) \cap (\mR^d \setminus y(\Omega))| 
        & \geq |B(z^n,r) \cap (\mR^d \setminus y^n(\Omega))| -
        |B(z^n,r)\setminus B(z,r)| - |y(\Omega)\setminus y^n(\Omega)| \\
        & \geq b |B(z^n,r)| -
        |B(z^n,r)\setminus B(z,r)| - |y(\Omega)\setminus y^n(\Omega)|.
    \end{aligned}
    \end{equation*}
    Now fix $\ep>0$. For $n_0\in \mN$ \UUU large \EEE enough there holds $|B(z^n,r)\setminus B(z,r)| \leq \ep$ 
    and
    $|y(\Omega))\setminus y^n(\Omega))|\leq |\partial y(\Omega)| + \ep$
    for every $n \geq n_0$.
    Hence, 
    \begin{equation*}
        |B(z,r) \cap (\mR^d \setminus y(\Omega))| 
        \geq b |B(z^n,r)| -
        |\partial y(\Omega)| - 2\ep,
    \end{equation*}
    for every $n \geq n_0$.
    Since $y\in W^{1,q}(\Omega;\mR^d)$ and 
    $\Omega$ is a bounded Lipschitz domain,
    there exists an extension
    $\tilde{y}\in W^{1,q}(\tilde{\Omega};\mR^d)$, \UUU where $\tilde
    \Omega\supset \bar \Omega$ is a domain with smooth boundary, \EEE
    such that, \UUU by \EEE  identifying the maps with their continuous representative, \OOO
    $\tilde{y}|_{\bar{\Omega}} = y|_{\bar{\Omega}}$. \EEE
    Since $y$ is a homeomorphism, arguing as in  \cite[Lemma 3.1]{kruzik.melching.stefanelli}
    we have that
    $|\partial y(\Omega)|=|y(\partial \Omega)| = |\tilde{y}(\partial \Omega)|=0$ as $|\partial \Omega|=0$ and $\tilde{y}$ satisfies the Lusin $\cN$-condition, see \cite{MarMiz1973}.
    Owing to the arbitrariness of $\ep$, passing to the limit as $n\to +\infty$ we obtain
     \begin{equation*}
        |B(z,r) \cap (\mR^d \setminus y(\Omega))| 
        \geq b |B(z,r)|,
    \end{equation*}
    which in turn yields that $y(\Omega)$ is regular with constants $b$ and $r_0$.
\end{proof}

Combining Proposition~\ref{prop:lsc_cap} and
Lemmas~\ref{lem:zero_trace_characterisation}--\ref{lem:dom_regularity},
we \UUU are in the position of presenting \EEE a lower semicontinuity
result for the capacity \UUU under no a priori condition on the
regularity of the boundary \EEE of $y(\Omega)$ but under uniform regularity of $\{y^n(\Omega)\}$.

\begin{Proposition}
    Let 
    $y$, $y^n\in W^{1,q}(\Omega;\mR^d)$, $q>d$,
    be homeomorphisms such that $y^n\wk y$ weakly in 
    $W^{1,q}(\Omega;\mR^d)$,
    and $y^n(\Omega)$ is a regular domain with constants $b$ and $r_0$ for every $n\in \mN$.
    Then \eqref{neq:lsc_capacity} holds.
\end{Proposition}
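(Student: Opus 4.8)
The plan is to reduce this final Proposition to the already-established lower semicontinuity result in Proposition~\ref{prop:lsc_cap}. Recall that the conditional statement \eqref{neq:lsc_capacity} holds provided the limiting domain $y(\Omega)$ satisfies the identification $W^{1,2}_0(y(\Omega)) = \accentset{\,\circ}{W}^{1,2}(y(\Omega))$. Thus the entire task is to verify that, under the uniform regularity hypothesis on $\{y^n(\Omega)\}$, this Sobolev-space identification holds for the weak limit $y(\Omega)$. Once this is shown, the conclusion \eqref{neq:lsc_capacity} is immediate.

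First I would invoke Lemma~\ref{lem:dom_regularity} to propagate regularity from the approximating sequence to the limit: since each $y^n(\Omega)$ is regular with uniform constants $b$ and $r_0$ and $y^n\wk y$ weakly in $W^{1,q}$, the lemma guarantees that $y(\Omega)$ is itself a regular domain. This is the crucial structural input, as it places the limiting domain within the scope of the zero-trace characterizations of Subsection~\ref{subs:geo}.

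The heart of the argument is then to establish the inclusion $\accentset{\,\circ}{W}^{1,2}(y(\Omega)) \subset W^{1,2}_0(y(\Omega))$, the reverse being automatic by \eqref{sobolev_inclusion}. Let $u\in \accentset{\,\circ}{W}^{1,2}(y(\Omega))$, i.e.\ $u\in W^{1,2}(\mR^d)$ vanishing a.e.\ outside $y(\Omega)$, and regard its restriction as an element of $W^{1,2}(y(\Omega))$. By Lemma~\ref{lem:zero_trace_characterisation}, applied to the regular domain $D=y(\Omega)$ with $p=2$, membership in $W^{1,2}_0(y(\Omega))$ is equivalent to the condition $u/d\in L^1(y(\Omega))$ together with $u\in W^{1,2}(y(\Omega))$, where $d(\xi)=\dist(\xi,\mR^d\setminus y(\Omega))$. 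Here the main obstacle—and the step I expect to require the most care—is verifying the Hardy-type integrability $u/d\in L^1$. The natural route is to approximate $u$ by functions $u_k\in W^{1,2}_0(y(\Omega))$ (e.g.\ via truncation and the fact that $u$ vanishes outside $y(\Omega)$), apply the uniform Hardy inequality \eqref{neq:u/d} of Lemma~\ref{lem:ud} to each $u_k$, obtaining $\|u_k/d\|_{L^2(y(\Omega))}\leq C\|\nabla u_k\|_{L^2(y(\Omega))}$ with $C$ independent of $k$, and then pass to the limit using Fatou's lemma to transfer the bound $\|u/d\|_{L^2(y(\Omega))}\leq C\|\nabla u\|_{L^2(y(\Omega))}<\infty$ to $u$ itself. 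Since $y(\Omega)$ is bounded, $L^2$-integrability of $u/d$ yields $L^1$-integrability, and Lemma~\ref{lem:zero_trace_characterisation} then closes the argument.

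With the identification $W^{1,2}_0(y(\Omega)) = \accentset{\,\circ}{W}^{1,2}(y(\Omega))$ in hand, the hypotheses of Proposition~\ref{prop:lsc_cap} are satisfied, and \eqref{neq:lsc_capacity} follows directly. The only genuinely technical point is the Hardy-inequality passage to the limit described above; everything else is a matter of assembling the cited lemmas in the correct order.
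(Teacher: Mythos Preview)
Your high-level strategy---first establish the identification $W^{1,2}_0(y(\Omega))=\accentset{\,\circ}{W}^{1,2}(y(\Omega))$ for the limiting domain and then invoke Proposition~\ref{prop:lsc_cap}---is different from the paper's, which does \emph{not} attempt to prove that abstract identification. More importantly, your proposed verification of the identification contains a circularity. You want to show that an arbitrary $u\in\accentset{\,\circ}{W}^{1,2}(y(\Omega))$ satisfies $u/d\in L^2$, and you propose to approximate $u$ by functions $u_k\in W^{1,2}_0(y(\Omega))$ ``via truncation'' and apply Hardy to the $u_k$. But producing such approximants with uniformly controlled $\|\nabla u_k\|_{L^2}$ is precisely the difficulty: any cutoff $u_k=u\,\phi_k$ with $\phi_k$ supported in $\{d>1/(2k)\}$ introduces a gradient term $u\,\nabla\phi_k$ whose $L^2$-norm is of the order of $\|u/d\|_{L^2(\{d\sim 1/k\})}$, i.e.\ exactly the quantity you are trying to bound. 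And if you had $u_k\to u$ in $W^{1,2}$ with $u_k\in W^{1,2}_0$, you would already be done without ever invoking Hardy. So the ``most delicate step'' you flag is, as written, assuming its own conclusion.

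The paper avoids this by never proving the space identification on $y(\Omega)$. Instead it reopens the proof of Proposition~\ref{prop:lsc_cap} and works directly with the specific near-minimizers $v^n\in W^{1,2}_0(y^n(\Omega))$ of the capacity. Because each $v^n$ genuinely lies in $W^{1,2}_0$ of its \emph{own} domain $y^n(\Omega)$---which is regular with the uniform constants $b,r_0$---Lemma~\ref{lem:ud} applies there and yields the uniform bound $\|v^n/d^n\|_{L^2(y^n(\Omega))}\leq C\|\nabla v^n\|_{L^2}\leq C$. One then shows $\tfrac{v^n}{d^n}\chi_{y^n(\Omega)}\to\tfrac{v}{d}\chi_{y(\Omega)}$ a.e.\ (using $v^n\to v$ a.e., $d^n\to d$ from the uniform convergence of $y^n$, and $|\partial y(\Omega)|=0$), and weak-$L^2$ lower semicontinuity gives $v/d\in L^2(y(\Omega))$. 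Lemma~\ref{lem:zero_trace_characterisation} then places $v$ in $W^{1,2}_0(y(\Omega))$, and the rest of the argument in Proposition~\ref{prop:lsc_cap} goes through. The point is that Hardy is applied on the \emph{approximating} domains, where membership in $W^{1,2}_0$ is already known, rather than on the limit domain, where it is what you are trying to prove.
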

\begin{proof}
    In view of Lemma \ref{lem:zero_trace_characterisation}, following the proof of Proposition~\ref{prop:lsc_cap}, it is enough to show that 
    $v/d$ is bounded in $L^2 (y(\Omega))$.
    First, from Lemma~\ref{lem:ud}, we obtain the uniform bound
    \begin{equation}\label{neq:vdbound}
        \int\limits_{y^n(\Omega)}\left|\frac{v^n(\xi)}{d^n(\xi)}\right|^2 \, \d \xi  
        \leq C\int\limits_{y^n(\Omega)}\left|\nabla v^n\right|^2 \d \xi \leq
        \UUU C\EEE,
    \end{equation}
    for a constant $C$ independent of $n$.
    
    Moreover, up to subsequences, 
    \begin{equation}\label{conv:vd}
    \frac{v^n(\xi)}{d^n(\xi)} \chi_{y^n(\Omega)}(\xi) \to \frac{v(\xi)}{d(\xi)} \chi_{y(\Omega)}(\xi) \quad \text{a.e.\ in } \mR^d.
    \end{equation}
    Indeed, 
    the almost everywhere convergence of $\{v^n\}$ to $v$ follows from Sobolev embeddings, whereas the pointwise convergence of $\{d^n\}$ to $d$ results from the uniform convergence of $y^n$ to $y$.
    Convergence \eqref{conv:vd} follows then directly \UUU for \EEE
    $\xi\in y(\Omega)$, as $d^n(\xi)$, $d(\xi) > \alpha >0$ for $n$
    \UUU large \EEE enough. Analogously, if $\xi\in \mR^d \setminus
    \overline{y(\Omega)}$, then $\xi\in \mR^d \setminus
    \overline{y^n(\Omega)}$ for $n$ big enough, and so both sides of
    \eqref{conv:vd} are equal to $0$. Arguing as in the proof of
    Lemma~\ref{lem:dom_regularity}, we \UUU find \EEE that $\partial
    y(\Omega)$ has measure zero, \UUU which completes \EEE the proof of \eqref{conv:vd}.
    
    From the pointwise convergence \eqref{conv:vd} and from the boundedness in~\eqref{neq:vdbound} we conclude that 
    $\frac{v}{d} \chi_{y(\Omega)}$
    is the weak limit of 
     $\frac{v^n}{d^n} \chi_{y^n(\Omega)}$ in $L^2(\R^d)$
     (see, for example, \cite[Theorem 13.44]{HewStr1975}).
     Thus,
     \begin{equation*}
        \int\limits_{y(\Omega)}\left|\frac{v(\xi)}{d(\xi)}\right|^2 \, \d \xi 
        \leq \liminf_{n\to \infty}
        \int\limits_{y^n(\Omega)}\left|\frac{v^n(\xi)}{d^n(\xi)}\right|^2 \, \d \xi  
        \leq C,
    \end{equation*}
    which in turn yields the thesis.
\end{proof}







\section*{Acknowledgements}
E.~D. acknowledges support from the Austrian Science Fund (FWF) through
projects F\,65,  I\,4052, V\,662, and Y1292, as well as from BMBWF through
the OeAD-WTZ project CZ04/2019. The research activity of \OOO A.~M. \EEE has been
supported by the Austrian Science Fund (FWF) \OOO projects M\,2670, I 5149, and by the OeAD-WTZ project CZ 01/2021. \UUU U.~S. is supported by the Austrian Science Fund (FWF) projects
F\,65, W\,1245,  I\,4354, I\,5149, and P\,32788, and by the OeAD-WTZ project CZ
01/2021. \EEE


\end{document}